\documentclass[11pt]{article}

\usepackage[a4paper,margin=30mm]{geometry}
\usepackage{amsmath,amssymb,amsthm,mathtools}
\usepackage{enumitem}
\usepackage{microtype}
\usepackage{hyperref}
\usepackage{mathrsfs}
\usepackage{aliascnt}
\usepackage{ragged2e}

\allowdisplaybreaks
\setlist{itemsep=2pt,topsep=4pt}

\hypersetup{
  colorlinks=true,
  linkcolor=blue,
  citecolor=blue,
  urlcolor=blue,
  pdftitle={Perfect Matching in k-Partite k-Uniform Hypergraphs},
  pdfsubject={Perfect matchings in balanced partite uniform hypergraphs},
  pdfkeywords={hypergraph matching, partite hypergraph, Dirac threshold, fractional matching, space barrier, stability}
}

\newtheorem{theorem}{Theorem}[section]
\newaliascnt{lemma}{theorem}
\newtheorem{problem}{Problem}[section]

\newtheorem{lemma}[lemma]{Lemma}
\aliascntresetthe{lemma}
\newaliascnt{proposition}{theorem}

\aliascntresetthe{proposition}
\newaliascnt{corollary}{theorem}
\newtheorem{corollary}[corollary]{Corollary}
\aliascntresetthe{corollary}
\theoremstyle{definition}
\newaliascnt{definition}{theorem}

\aliascntresetthe{definition}
\theoremstyle{remark}
\newaliascnt{remark}{theorem}

\aliascntresetthe{remark}
\usepackage[capitalise,noabbrev]{cleveref}

\newcommand{\E}{\mathbb E}
\newcommand{\Pp}{\mathbb P}
\newcommand{\Law}{\mathcal L}
\newcommand{\one}{\mathbf 1}
\newcommand{\cP}{\mathcal P}
\newcommand{\Wass}{d_{\mathrm W}}
\newcommand{\dd}{\,\mathrm d}

\newcommand{\nuast}{\nu^{*}}

\title{Perfect Matching in $k$-Partite $k$-Uniform Hypergraphs}
\date{}
\author{
Jie Han\thanks{School of Mathematics and Statistics,
Beijing Institute of Technology, Beijing, China.},
Hongliang Lu\thanks{School of Mathematics and Statistics,
Xi'an Jiaotong University, Xi'an, Shaanxi, China.},
Bin Wang\footnotemark[1] and
Feihong Yuan\footnotemark[2]
}

\begin{document}
\maketitle

\begin{abstract}
A balanced $k$-partite $k$-graph is a $k$-uniform hypergraph whose
vertex set is partitioned into $k$ classes of the same size and whose
edges meet every class in exactly one vertex.  Lo and Markstr\"om (2014)
determined the minimum vertex-degree threshold for perfect matchings
when $k=3$, and Lu, Wang and Yuan  recently determined it when $k=4$.
We prove the corresponding exact result for every fixed $k\ge5$ and
all sufficiently large class sizes.  The close case follows from the
general theorem of Lu, Wang and Yuan. For the non-closed case, we extend their stability result from the 3-partite setting to arbitrary partite uniformity, using the probability-tail rigidity theorem of Cao, Liu and Zhang, thereby replacing the earlier weighted lemma. 
\end{abstract}

\section{Introduction}

A \textit{\(k\)-uniform hypergraph}, or \textit{\(k\)-graph}, consists of a vertex set \(V\) and an edge set \(E \subseteq \binom{V}{k}\). A \textit{matching} in a hypergraph is a collection of pairwise disjoint edges, and a \textit{perfect matching} is one that covers all vertices of \(V\). For a set \(S \subseteq V\), let \(d_H(S)\) denote the number of edges containing \(S\). For \(0 \le \ell \le k-1\), the \textit{minimum \(\ell\)-degree} \(\delta_\ell(H)\) is defined as the minimum of \(d_H(S)\) over all \(\ell\)-subsets \(S \subseteq V\); in particular, \(\delta_0(H)=e(H)\) and \(\delta_1(H)=\delta(H)\).

A fundamental and challenging problem in extremal combinatorics is to determine minimum degree conditions that force the existence of a large matching, and in particular, a perfect matching. In the graph setting (i.e., \(k=2\)), Dirac's theorem~\cite{Dirac} states that every graph on
\(N\ge3\) vertices with minimum degree at least \(N/2\)
contains a Hamilton cycle, and hence a perfect matching
when \(N\) is even. For uniform hypergraphs, however, the problem becomes substantially more intricate and has received sustained attention. 
For \(N\ge k\) divisible by \(k\), let \(m_\ell(k,N)\) denote the smallest integer \(m\) such that every \(k\)-graph \(H\) on \(N\) vertices with \(\delta_\ell(H) \ge m\) contains a perfect matching.


For $1\le\ell\le k-1$, two standard obstructions compete.  The
\emph{space barrier} consists of all edges meeting a fixed set of size
slightly less than $N/k$, whereas a \emph{divisibility} or \emph{parity barrier} may
retain asymptotically one half of all edges.  These constructions motivated the asymptotic conjecture of
H\`an, Person, and Schacht~\cite{HanPersonSchacht}, which
asserted that
\[
 m_\ell(k,N)\sim
 \max\left\{
   \frac12,
   1-\left(1-\frac1k\right)^{k-\ell}
 \right\}
 \binom{N-\ell}{k-\ell}.
\]
The codegree case $\ell=k-1$ was determined exactly for every fixed
$k\ge3$ and all sufficiently large $N$ by R\"odl, Ruci\'nski and
Szemer\'edi \cite{RRS}.  For \(k\ge4\), Pikhurko~\cite{Pikhurko} obtained the asymptotically sharp
threshold in the range \(k/2\le\ell\le k-2\).
Treglown and Zhao~\cite{TreglownZhao1,TreglownZhao2} subsequently determined the corresponding exact thresholds.  In the more difficult range
$\ell<k/2$, H\`an, Person and Schacht~\cite{HanPersonSchacht} obtained the asymptotically sharp
vertex-degree threshold for $3$-graphs; the
exact threshold was then determined independently by K\"uhn, Osthus
and Treglown \cite{KuhnOsthusTreglown} and by Khan \cite{Khan3}.
Khan~\cite{Khan4} subsequently determined the exact vertex-degree threshold
for \(4\)-graphs. More recently, Frankl, Lu, Ma,
and Wu~\cite{FranklLuMaWu} determined the exact thresholds for
\((k,\ell)=(5,1)\) and \((6,2)\).
The asymptotic conjecture of H\`an, Person and Schacht \cite{HanPersonSchacht} has now
been resolved, using the connection established in \cite{AlonEtAl,FJ19}, through recent work on Feige’s conjecture by Fu et al.~\cite{FHWYZZ} and, independently, by Nie and Wei~\cite{NieWei}.
At the exact level, Han, Lu, Wang and Yuan~\cite{HLWY}
settled all remaining cases, thereby determining \(m_\ell(k,N)\)
for every fixed \(k\ge3\) and \(1\le\ell\le k-1\) and all
sufficiently large \(N\) divisible by \(k\).
Their result resolves the conjecture of Treglown and Zhao \cite{TreglownZhaoNote}
on exact minimum degree thresholds, including the vertex-degree
case conjectured earlier by K\"uhn, Osthus and Treglown \cite{KuhnOsthusTreglown}.
We also note that the connections among the Erd\H{o}s matching problem, the perfect matching threshold problem, and fractional matching problems have been systematically explored in \cite{AlonEtAl,KuhnOsthusTownsend,TreglownZhaoNote}.

\subsection{Matchings in $k$-partite $k$-graphs}
A $k$-partite $k$-graph has a vertex partition
$V_1\dot\cup\cdots\dot\cup V_k$ such that every edge contains exactly one
vertex from each class.  A set is \emph{legal} if it meets every class
in at most one vertex.  We call the hypergraph \emph{balanced} if
$|V_1|=\cdots=|V_k|$.  For $0\le\ell\le k-1$, the minimum legal
$\ell$-degree $\delta_\ell(H)$ is the minimum of $d_H(S)$ over all
legal $\ell$-sets $S$.  Define $m'_\ell(k,n)$ to be the least integer
$m$ such that every balanced $k$-partite $k$-graph $H$ with $n$ vertices in each class and
$\delta_\ell(H)\ge m$ contains a perfect matching.

K\"uhn and Osthus~\cite{KuhnOsthus}  established an
asymptotically sharp minimum codegree condition for perfect
matchings in balanced \(k\)-partite \(k\)-graphs. Aharoni, Georgakopoulos and Spr\"ussel~\cite{AharoniGeorgakopoulosSprussel} later showed
that $m'_{k-1}(k,n)\le n/2+1$. 
For every fixed \(k\ge3\) and all sufficiently large \(n\),
Lu, Wang, and Yu~\cite{LuWangYuCodegree} characterized
the balanced \(k\)-partite \(k\)-graphs with
\(\delta_{k-1}(H)\ge\lfloor n/2\rfloor\) and no perfect
matching, thereby determining \(m'_{k-1}(k,n)\) exactly.  For almost perfect matchings, Han, Zang, and
Zhao~\cite{HanZangZhao} and, independently, Lu, Wang,
and Yu~\cite{LuWangYuAlmost} proved that, for every fixed
\(k\ge3\) and all sufficiently large \(n\), the condition
\(\delta_{k-1}(H)\ge n/k\) guarantees a matching of size
\(n-1\). Pikhurko~\cite{Pikhurko} proved an Ore-type condition
involving the minimum degrees of complementary types
of legal tuples. In particular, his result implies $m'_\ell(k,n)\sim \frac12 n^{k-\ell}$ for every fixed \(k\ge3\) and \(k/2\le\ell<k\), as
\(n\to\infty\).
For the vertex-degree problem, Aharoni, Georgakopoulos, and Spr\"ussel~\cite{AharoniGeorgakopoulosSprussel} proposed the following problem. 
\begin{problem}\label{AGS}
Is it true that if $ m'_1(k,n)<(1-1/e)n^{k-1}$?
\end{problem}
Lo and Markstr\"om  \cite{LM} determined
$m'_1(3,n)$ exactly for all sufficiently large $n$.
Lu, Wang and Yuan~\cite{LWY}  subsequently determined $m'_1(4,n)$  for all sufficiently large \(n\).
As part of their proof, they established the close
case for every fixed \(k\ge4\).

In this paper, we determine  \(m'_1(k,n)\) for every
fixed \(k\ge3\) and all sufficiently large \(n\), and thus answer Problem \ref{AGS} in the affirmative.
Our proof builds on the framework of Lu, Wang, and
Yuan~\cite{LWY}. A key ingredient in their non-close
argument is a weighted stability lemma for
\(3\)-partite \(3\)-graphs.
In this paper we establish an analogous weighted stability lemma for balanced \((k-1)\)-partite \((k-1)\)-graphs arising
as vertex links in \(k\)-partite \(k\)-graphs. Our proof builds on recent breakthroughs on Feige's conjecture and its connection to fractional matchings.  

\subsection{Extremal constructions and main result}

We adopt the notation from Lo, Markstr\"{o}m \cite{LM} and Lu, Wang, Yuan \cite{LWY}. Let
$V_1,\ldots,V_k$ be pairwise disjoint sets of size $n$.  For integers
$0\le d_i\le n$, choose $W_i\subseteq V_i$ with $|W_i|=d_i$, and let \(W:=W_1\cup\cdots\cup W_k\).
Define \(H_k(n;d_1,\ldots,d_k)\) to be the \(k\)-partite
\(k\)-graph with vertex classes \(V_1,\ldots,V_k\) and edge set
\[
 E\bigl(H_k(n;d_1,\ldots,d_k)\bigr)
 :=
 \left\{
   \{v_1,\ldots,v_k\}:
   v_i\in V_i\text{ for each }i\in[k],\
   \{v_1,\ldots,v_k\}\cap W\ne\varnothing
 \right\}.
\]
For each integer $1\le m\le kn$, define
\begin{equation}
\label{eq:dim}    
d_i(m):=\left\lfloor\frac{m+i-1}{k}\right\rfloor
\end{equation}
for each $i\in[k]$ and $ H_k(n;m):=H_k(n;d_1(m),\ldots,d_k(m))$.
Thus $\sum_i d_i(m)=m$.  Let $H_k^0(n;m)$ be obtained from
$H_k(n;m)$ by deleting every edge contained entirely in
$W_1\cup\cdots\cup W_k$.
Write $m=rk+s$, where $r\ge0$ and
$s\in[k]$, and put $t=\lfloor m/(r+1)\rfloor$.  Define
\[
 a_i=
 \begin{cases}
  r+1,&1\le i\le t,\\
  m-(r+1)t,&i=t+1,\\
  0,&t+2\le i\le k,
 \end{cases}
\]
where the middle line is omitted when $t=k$.  Set
$H'_k(n;m):=H_k(n;a_1,\ldots,a_k)$ and $d_k(n,m):=\delta_1\bigl(H'_k(n;m)\bigr)$.
A direct calculation gives
\begin{equation}\label{eq:dk-general}
 d_k(n,m)
 =n^{k-1}
  -n^{\max\{0,k-t-1\}}
   (n-r-1)^{t-1}
   \bigl(n-m+(r+1)t\bigr)^{\min\{1,k-t\}}.
\end{equation}
Every edge of $H'_k(n;n-1)$ meets a fixed set of $n-1$ vertices, so
it contains no perfect matching.  It follows that
$m'_1(k,n)\ge d_k(n,n-1)+1$.
Our main result establishes that this lower bound is sharp for all sufficiently large \(n\).

\begin{theorem}\label{thm:main}
For every fixed integer $k\ge4$, there is $n_0=n_0(k)$ such that, for
all $n\ge n_0$,
\[
 m'_1(k,n)=d_k(n,n-1)+1.
\]
\end{theorem}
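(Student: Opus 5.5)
The lower bound $m'_1(k,n)\ge d_k(n,n-1)+1$ is already given by $H'_k(n;n-1)$: every edge meets a fixed set of $n-1$ vertices, so there is no perfect matching. It remains to show that every balanced $k$-partite $k$-graph $H$ with $\delta_1(H)\ge d_k(n,n-1)+1$ has a perfect matching. I will follow the absorbing/stability framework of Lu, Wang and Yuan~\cite{LWY}. Fix constants $1/n\ll\gamma\ll\varepsilon\ll 1/k$. Call $H$ \emph{$\varepsilon$-close} to the space barrier if there are $W_i\subseteq V_i$ with $\sum_i|W_i|\approx n$ such that all but $\varepsilon n^k$ edges meet $W=\bigcup_i W_i$. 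The proof splits into two cases.

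\textbf{Close case.} Here I invoke the general theorem of Lu, Wang and Yuan, which they established for every fixed $k\ge4$. It says that an $\varepsilon$-close $H$ with $\delta_1(H)>d_k(n,n-1)$ has a perfect matching. In outline, one cleans up $W$, matches the atypical vertices greedily, and uses the exact value of $d_k(n,n-1)$ from \eqref{eq:dk-general} to find a matching in which each edge uses exactly one vertex of $W$. So this case needs only a check that their hypotheses match ours.

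\textbf{Non-close case.} I will use the absorbing method.
\begin{enumerate}[label=(\roman*)]
\item Build an absorbing matching $M_0$ of size $\le\gamma n$. Every balanced legal $k$-set should have $\Omega(n^{k^2})$ absorbers, and this follows from the degree condition because $d_k(n,n-1)\approx(1-(1-1/k)^{k-1})n^{k-1}$ is well above the thresholds for the usual lattice/reachability arguments. Concretely, any two vertices in the same class are $1$-reachable, since their links intersect substantially.
\item In $H-V(M_0)$, find an almost perfect matching leaving $o(n)$ vertices uncovered. By the standard Rödl nibble / regularity route, it suffices to show that the reduced partite hypergraph has a perfect fractional matching.
\item Absorb the leftover vertices using $M_0$.
\end{enumerate}

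The main obstacle is step (ii): showing that a non-close $H$ with this vertex degree has a near-perfect fractional matching. By LP duality, failure gives a fractional vertex cover of weight $<n$. Averaging this cover over vertex links turns it into a statement about the links $H(v)$, which are balanced $(k-1)$-partite $(k-1)$-graphs with density at least $1-(1-1/k)^{k-1}$. So I need a weighted stability lemma: if a weighting of the link's vertices makes many edges "heavy", then the link is close to a star-like space structure. This is the analogue of the $3$-partite weighted lemma in~\cite{LWY}.

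I would prove this lemma by recasting it probabilistically. Let $X_i$ be the weight of a uniformly random vertex of class $i$; then the edge-density condition becomes a lower bound on $\Pp(\sum_i X_i\ge 1)$ for independent variables with $\E\sum_i X_i<1$. Feige-type tail bounds (Fu et al., Nie--Wei) give the asymptotic version. The tail rigidity theorem of Cao, Liu and Zhang says that near-equality forces the law of each $X_i$ to be close to a two-point distribution concentrated on $\{0,1\}$ with the extremal masses. Translating back, the cover weights are essentially $0/1$ and supported on a set of about $n$ vertices, so $H$ is $\varepsilon$-close, a contradiction.

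The delicate part is propagating the link-level rigidity, which holds for most $v$, to a single global set $W$. I would do this by comparing the near-extremal supports of different links through their common $(k-2)$-subsets. I expect this consistency argument, together with tracking the error terms in the rigidity theorem, to be the hardest step.
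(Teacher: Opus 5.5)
Your architecture matches the paper's: the lower bound from $H'_k(n;n-1)$, the close case by Lu--Wang--Yuan, and the non-close case by absorption plus an almost perfect matching built from perfect fractional matchings, with the fractional step reduced via LP duality to a weighted stability lemma for a link. However, the key reduction in step (ii) uses a hypothesis that is too weak. Knowing only $\E\sum_i X_i<1$ for the $k-1$ link variables gives no nontrivial bound. Take $X_1=1$ with probability $1-\theta$ and $X_i\equiv 0$ for $i\ge 2$; then $\Pp(\sum_i X_i\ge 1)=1-\theta$, so there is nothing for a rigidity statement at density $1-(1-1/k)^{k-1}$ to say. The Cao--Liu--Zhang bound $\Pp(\sum_i Z_i<1)\ge (r/(r+1))^r$ needs $\sum_i\E Z_i+\max_i\E Z_i\le 1$; this is exactly what makes $T=r+(1-A)/M\ge r+1$ after rescaling. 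Moreover, for an edge $e$ of the link of $v$, the cover only gives $\sum_{u\in e}g(u)\ge 1-g(v)$, so the event $\sum_i X_i\ge 1$ is guaranteed on link edges only when $g(v)=0$.

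The missing idea is to pick a single, specific vertex. Relabel so that $g(V_1)=\max_i g(V_i)$. By complementary slackness some $v_0\in V_1$ has $g(v_0)=0$; otherwise a maximum fractional matching saturates all of $V_1$ and has weight $n$. Then $g$ covers $L=N_H(v_0)$, and $g(V(L))+\max_{i\ge 2}g(V_i)\le g(V(H))<n$. You also need a compactness argument to turn the CLZ equality characterization into a stability statement.

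With this choice, the consistency step you expect to be hardest disappears. Since $g$ is one global function, its level sets $\{g\ge 1-\xi\}$ and $\{g\le\xi\}$ are automatically global. Rigidity in the single link $N_H(v_0)$ gives sets $W_i,X_i$ of sizes about $n/k$ and $(k-1)n/k$ in $V_2,\dots,V_k$. The structure in $V_1$ then follows from $g(V_1)\le(1/k+2k\xi)n$ together with the degree of a vertex $x_k\in X_k$: the set $\{v_1,x_2,\dots,x_k\}$ is a non-edge whenever $g(v_1)<1-(k-1)\xi$ and $x_i\in X_i$. Applying rigidity to ``most'' links would not work anyway, because links of vertices with $g(v)>0$, or outside the heaviest class, need not satisfy the mean condition.

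Finally, your closeness notion is vacuous as written: taking $W_1=V_1$ makes every $H$ close. It also points the opposite way from the Lu--Wang--Yuan hypothesis, which is that $H$ almost \emph{contains} a copy of $H_k^0(n;n)$ with $|W_i|=d_i(n)$. Your non-close argument must reach that conclusion. To do so, use the balanced sizes $|W_i|\approx n/k$ in every class, and the minimum degree at vertices of $U_i=V_i\setminus W_i^0$, to show $|E(H^0)\setminus E(H)|\le\varepsilon n^k$.
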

Given \(\varepsilon>0\), a balanced \(k\)-partite \(k\)-graph
\(H\) with \(n\) vertices in each class is said to be
\(\varepsilon\)-\emph{contained} in \(H_k^0(n;n)\) if there exists
a copy \(F\) of \(H_k^0(n;n)\) on \(V(H)\), respecting the
vertex partition up to a permutation of the classes, such that
$ |E(F)\setminus E(H)|\le\varepsilon n^k$.




The proof of \Cref{thm:main} builds on the stability framework of R\"odl, Ruci\'nski and Szemer\'edi \cite{RRS}, and for the $k$-partite case, by Lu, Wang and Yuan \cite{LWY}. 
Fortunately, the case $H$ is close to the extremal example is already resolved in \cite{LWY}.
For the non-close case, we first establish a perfect fractional matching theorem, whose proof relies on a weighted stability result for \((k-1)\)-partite link hypergraphs. 
The latter is derived from the recent breakthroughs on perfect fractional matchings by Cao et al.~\cite{CLZ}, which in turn is based on Feige's conjecture on sum of independent
nonnegative random variables by Fu et.al.~\cite{FHWYZZ} and by Nie and Wei~\cite{NieWei}. 
We then convert the perfect fractional matching(s) to an almost perfect matching, which together with an absorbing lemma by Lo and Markstr\"om yields a perfect matching.

\medskip
\noindent\textbf{Organization.}
 In \Cref{sec:fractional}, we prove the non-close perfect fractional matching theorem, along with the weighted partite stability lemma and its probabilistic proof. In \Cref{sec:nonclose}, we combine fractional matchings, the absorption technique, and the nibble method to resolve the non-close case. The close-case result of Lu, Wang, and Yuan is presented in \Cref{sec:completion}. Finally, in \Cref{sec:completion}, we merge the close and non-close cases to complete the proof of \Cref{thm:main}.

\section{Perfect fractional matchings}\label{sec:fractional}

It is well-known now in dense hypergraphs, the existence of almost perfect matchings is essentially equivalent to the existence of perfect fractional matchings.
This allows us to study the problem via linear programming tools for fractional matchings.
Let $H$ be a $k$-graph.  
A \emph{fractional matching} in $H$ is a function
$f:E(H)\to[0,1]$ such that $\sum_{e\ni v}f(e)\le1$ for every vertex $v$.
Its \emph{weight} is $\sum_{e\in E(H)}f(e)$, and it is \emph{perfect} if its weight is
$|V(H)|/k$.  
The \emph{fractional matching number} of $H$ is $\nu^*(H):=
\max\{
\sum_{e\in E(H)}f(e):
f \text{ is a fractional matching of }H\}$.
A \emph{fractional vertex cover} is a function
$g:V(H)\to[0,1]$ such that $\sum_{v\in e}g(v)\ge1$ for every edge $e$. 
The \emph{fractional vertex-cover number} of $H$ is $\tau^*(H):=
\min\{
\sum_{v\in V(H)}g(v):
g \text{ is a fractional vertex cover of }H\}$.
Linear programming duality gives $\nu^*(H)=\tau^*(H)$.

The main result of this section is the following theorem on fractional matchings, whose $k=4$ case was proved by Lu, Wang and Yuan \cite{LWY}.

\begin{theorem}\label{thm:fractional-nonclose}
Fix $k\ge3$ and $\varepsilon>0$.  There are
$\gamma=\gamma(k,\varepsilon)>0$ and $n_0$ such that the following
holds for $n\ge n_0$.  If $H$ is a balanced $k$-partite $k$-graph with \(n\) vertices in each class and
\[
\delta_1(H) \geq \left(1-\left(\frac{k-1}{k}\right)^{k-1}-\gamma\right)n^{k-1},
\]
and $H$ is not $\varepsilon$-contained in $H_k^0(n;n)$, then $H$ has a
perfect fractional matching.
\end{theorem}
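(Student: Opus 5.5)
We argue through LP duality: $H$ has a perfect fractional matching iff $\tau^*(H)\ge n$. So we suppose $\nu^*(H)=\tau^*(H)<n$, fix an optimal fractional vertex cover $g:V(H)\to[0,1]$ with $\sum_v g(v)<n$, and aim to show that $H$ is $\varepsilon$-contained in $H_k^0(n;n)$. By complementary slackness we may also fix a maximum fractional matching $f$ such that every vertex with $g(v)>0$ is saturated by $f$, and every edge with $f(e)>0$ is tight for $g$.

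The key device is a probabilistic reformulation of the cover condition. For a vertex $v\in V_1$, say, pick $u_i\in V_i$ uniformly and independently for $i=2,\dots,k$, and set $X_i:=g(u_i)$. Then $\mathbb E[\sum_{i\ge2}X_i]=\sum_{i\ge2}g(V_i)/n$. Every edge $\{v,u_2,\dots,u_k\}$ of $H$ satisfies $\sum_i X_i\ge 1-g(v)$. Combining this with $d_H(v)\ge(1-((k-1)/k)^{k-1}-\gamma)n^{k-1}$ gives
\[
\Pp\Bigl(\sum_{i=2}^k X_i\ge 1-g(v)\Bigr)\ge 1-\Bigl(\tfrac{k-1}{k}\Bigr)^{k-1}-\gamma .
\]
Averaging $g$ over classes, we choose a vertex $v$ of small weight; such a vertex exists in some class whose weight is at most $\sum_v g(v)/k<n/k$. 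We then apply the probability-tail rigidity theorem of Cao, Liu and Zhang (the Feige-type statement of \cite{CLZ,FHWYZZ,NieWei}). For independent $[0,1]$-valued variables with total mean less than $(k-1)/k$ (in a suitable normalisation), this theorem bounds $\Pp(\sum X_i\ge 1)$ by $1-((k-1)/k)^{k-1}$. Moreover, if the probability is within $\gamma$ of this bound, then up to $o(1)$ in total variation each $X_i$ is essentially a Bernoulli variable taking the value $1$ with probability $\approx1/k$ and $0$ otherwise.

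This rigidity is where the theorem comes from: it is the weighted stability lemma for the $(k-1)$-partite link. The main obstacle is making the normalisation work. The bound has to be applied to the link of a vertex of small but possibly positive weight $g(v)$, so we rescale by $1-g(v)$. We must also control how $\sum_{i\ge 2}g(V_i)$ compares with the threshold. I would handle this by first proving that almost all vertices have $g(v)\in\{0,1\}$ up to $o(1)$. To do this, apply the tail bound to many low-weight vertices $v$. Each application forces the weight distributions on the other $k-1$ classes to be near-Bernoulli$(1/k)$, and this pins down each class $V_i$: there is a set $W_i$ with $|W_i|\approx n/k$ on which $g\approx1$, and $g\approx0$ elsewhere. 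This uses a short case analysis over which class $v$ lies in, and it must be run for at least two classes so that every class gets covered.

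Once this structure is established, set $W=\bigcup_i W_i$, so $|W|\approx n$. Every edge with $f(e)>0$, and indeed almost every edge of $H$ (by the cover property), meets $W$. Using the degree condition once more, together with the fact that $\Pp(\sum X_i\ge1)$ is essentially attained, we find that almost all legal $k$-sets meeting $W$ in exactly one vertex are edges of $H$. These are exactly the edges of $H_k^0(n;n)$ up to $o(n^k)$, after adjusting the sizes $|W_i|$ to $d_i(n)$ at a cost of $O(n^{k-1})$ changes per vertex moved. Hence $|E(F)\setminus E(H)|\le\varepsilon n^k$ for the resulting copy $F$ of $H_k^0(n;n)$, provided $\gamma$ is small in terms of $\varepsilon$ and $n\ge n_0$. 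This contradicts the assumption, so $\tau^*(H)\ge n$ and $H$ has a perfect fractional matching.
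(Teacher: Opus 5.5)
\textbf{Gap: the tail inequality is misstated and its hypothesis is never verified.} This happens exactly where you locate ``the main obstacle''.

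The bound is false in the form you state. Take $r=k-1$ independent $[0,1]$-valued variables with total mean below $r/(r+1)$: let $X_1\in\{0,1\}$ with $\Pp(X_1=1)=r/(r+1)-\epsilon$, and $X_2=\cdots=X_r=0$. Then $\Pp(\sum_i X_i\ge1)\approx r/(r+1)$, which exceeds $1-(r/(r+1))^r$ for every $r\ge2$.

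The correct hypothesis is $A+M\le1$, where $A$ is the total mean and $M$ is the \emph{largest} single mean. This is the paper's \Cref{lem:prob-rigidity}, obtained from Cao--Liu--Zhang via $Y_i=Z_i/M+1-a_i/M$ and $T=r+(1-A)/M\ge r+1$. Controlling $M$ is the whole point.

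Your vertex choice cannot supply this condition. A vertex of weight below $1/k$ in the lightest class may leave the other $k-1$ classes with total normalised weight close to $1$, and gives no bound on the heaviest of them. Rescaling the threshold $1-g(v)$ to $1$ multiplies every mean by $1/(1-g(v))$, which can be nearly $k/(k-1)$, so $A+M$ moves further above $1$. The step that ``forces near-Bernoulli$(1/k)$ laws'' is therefore never justified, and everything downstream depends on it.

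\textbf{The missing idea: use the complementary slackness you already set up.} If every vertex of some class had $g>0$, all of them would be saturated by $f$. Summing over that class, which every edge meets exactly once, would give $f(E(H))=n$, a contradiction. Hence every class contains a vertex with $g(v)=0$ \emph{exactly}.

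Take such a $v_0$ in the class $V_1$ of \emph{maximum} $g$-weight. Then $g$ itself is a fractional cover of the link $L$ of $v_0$, and no rescaling is needed. Moreover $g(V(L))+\max_{i\ge2}g(V_i)\le g(V(L))+g(V_1)=g(V(H))<n$, which is precisely $A+M\le1$.

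This single vertex, through the compactness stability lemma, yields the sets $W_i,X_i$ for $i\ge2$. That lemma gives closeness only in Wasserstein distance, not total variation, since weights slightly above $0$ cannot be excluded; Wasserstein plus Markov is still enough. The paper then recovers $W_1$ not by a second application of the tail bound but from the degree of a vertex of $X_k$. This works because $\{v_1,x_2,\dots,x_k\}$ with $x_i\in X_i$ is a non-edge whenever $g(v_1)<1-(k-1)\xi$.

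\textbf{A smaller error in the final comparison.} The edges of $H_k^0(n;n)$ are the legal $k$-sets meeting both $W$ and its complement, not only those meeting $W$ in exactly one vertex. The difference between the two families is $\Theta(n^k)$ edges. The last comparison should therefore be done, as in the paper, through degrees of vertices outside $W$.
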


We use the notation $a=b\pm c$ to indicate $b-c\le a\le b+c$. The key new ingredient for the proof of \Cref{thm:fractional-nonclose} is the following stability result, which indeed will be applied to the link $(k-1)$-partite $(k-1)$-graph of a vertex.

\begin{lemma}
\label{thm:weighted-stability}
Fix $r\ge2$ and $0<\xi<1/(2r)$.  
There is $\eta=\eta(r,\xi)>0$ such that the following holds for every $n$.
Let $L$ be a balanced $r$-partite $r$-graph with vertex classes \(V_1,\ldots,V_r\), each of size \(n\), and let $g:V(L)\to[0,1]$ be a fractional vertex cover.  
Suppose $ g(V(L))+\max_i g(V_i)\le n$ and  $e(L)\ge\left(1-\left(\frac r{r+1}\right)^r-\eta\right)n^r$.
Then for every $i\in[r]$ there are disjoint $W_i,X_i\subseteq V_i$ satisfying
\[
\begin{aligned}
|W_i| &= \left(\frac{1}{r+1}\pm\xi\right)n,
&\qquad g(w) &\ge 1-\xi \quad (w\in W_i),\\
|X_i| &= \left(\frac{r}{r+1}\pm\xi\right)n,
&\qquad g(x) &\le \xi \quad (x\in X_i).
\end{aligned}
\]
\end{lemma}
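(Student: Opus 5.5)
The plan is to restate the lemma as a statement about sums of independent bounded random variables, and then apply the probability-tail rigidity theorem of Cao, Liu and Zhang~\cite{CLZ}. Choose $v_1\in V_1,\ldots,v_r\in V_r$ independently and uniformly at random, and set $Y_i:=g(v_i)\in[0,1]$. The $Y_i$ are independent with means $\mu_i=g(V_i)/n$. The hypothesis $g(V(L))+\max_i g(V_i)\le n$ becomes
\[
 \sum_{i}\mu_i+\max_i\mu_i\le 1 .
\]
Since $g$ is a fractional vertex cover, every edge $\{v_1,\ldots,v_r\}\in E(L)$ satisfies $\sum_i g(v_i)\ge1$. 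Hence
\[
 \Pp\Bigl(\sum_i Y_i\ge 1\Bigr)\ge \frac{e(L)}{n^r}\ge 1-\Bigl(\frac r{r+1}\Bigr)^r-\eta .
\]
The extremal configuration is $Y_i\sim\mathrm{Bernoulli}(1/(r+1))$ for every $i$. Then $\sum\mu_i+\max\mu_i=1$ and $\Pp(\sum Y_i\ge1)=1-(r/(r+1))^r$. This corresponds to $W_i=\{g=1\}$ of size $n/(r+1)$ and $X_i=\{g=0\}$ of size $rn/(r+1)$, which is exactly the structure the lemma asserts.

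Next, I would invoke the Cao--Liu--Zhang theorem in its two parts. The first part is the sharp bound: if $Y_1,\ldots,Y_r$ are independent $[0,1]$-valued random variables with $\sum\mu_i+\max\mu_i\le1$, then $\Pp(\sum Y_i\ge1)\le 1-(r/(r+1))^r$. The second part is rigidity: equality holds only when each $Y_i$ is Bernoulli$(1/(r+1))$, and near-equality forces closeness to this configuration. If the rigidity is only available in its exact (equality-case) form, I would upgrade it to the needed $\eta$--$\xi$ statement by compactness. Suppose, for contradiction, that there are laws $\Law(Y^{(m)})$ of product form satisfying the constraints, with $\Pp(\sum Y^{(m)}_i\ge1)\to 1-(r/(r+1))^r$, but each violating the conclusion for a fixed $\xi$. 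By Prokhorov, pass to a subsequence converging weakly on $[0,1]^r$. The following properties survive the limit:
\begin{itemize}
\item the limit is still a product measure;
\item the means converge, so the constraint $\sum\mu_i+\max\mu_i\le1$ persists;
\item since $\{\sum y_i\ge1\}$ is closed, the Portmanteau theorem gives $\limsup\Pp(\cdot)\le\Pp_{\lim}(\cdot)$, so the limit attains equality.
\end{itemize}
Exact rigidity then makes the limit equal to Bernoulli$(1/(r+1))^{\otimes r}$. Weak convergence to this law gives $\Pp(Y^{(m)}_i\ge1-\xi)\to 1/(r+1)$ and $\Pp(Y^{(m)}_i\le\xi)\to r/(r+1)$, using open/closed neighbourhoods of $0$ and $1$ whose boundaries carry no mass in the limit. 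This contradicts the choice of the sequence. Because the argument is purely about laws on $[0,1]^r$, $\eta$ depends only on $r$ and $\xi$, uniformly in $n$, as the lemma requires.

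Finally, translate back. Set $W_i:=\{v\in V_i:g(v)\ge1-\xi\}$ and $X_i:=\{v\in V_i:g(v)\le\xi\}$. These are disjoint because $\xi<1/(2r)\le 1/2$. The probability estimates from the previous step become $|W_i|=(1/(r+1)\pm\xi)n$ and $|X_i|=(r/(r+1)\pm\xi)n$ once the constants are chosen suitably; for instance, apply the rigidity with a smaller parameter $\xi'\le\xi$ for the probability closeness.

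The main obstacle is the rigidity step itself: both the exact characterisation of the extremal case for general $r$ and the fact that the tail bound depends on the combined constraint $\sum\mu_i+\max\mu_i\le1$ rather than on $\sum\mu_i$ alone. I would take both from~\cite{CLZ}, whose argument rests on the resolution of Feige's conjecture~\cite{FHWYZZ,NieWei}. If their statement is phrased differently, for example by normalising so that $\sum\mu_i\le r/(r+1)$ and $\mu_i\le 1/(r+1)$, I would first reduce to that form, then run the compactness argument above.
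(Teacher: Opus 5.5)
\textbf{Gap: the reduction from Cao--Liu--Zhang to your constraint is missing.} Your architecture matches the paper's: independent $Y_i=g(v_i)$, a sharp tail bound with its equality case, a compactness/Portmanteau argument giving $\eta=\eta(r,\xi)$, and reading off $W_i=\{g\ge1-\xi\}$, $X_i=\{g\le\xi\}$. Using the continuity sets $[0,\xi]$ and $[1-\xi,1]$ instead of the paper's Wasserstein/Markov step is also fine.

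The gap is the step you flag as the main obstacle and then defer. The Cao--Liu--Zhang theorem, as the paper cites it, is not stated under the mixed constraint $\sum_i\mu_i+\max_i\mu_i\le1$. It says: for independent nonnegative $Y_1,\dots,Y_r$ with $\E Y_i\le1$ and $T\ge r+1$,
\[
\Pp\Bigl(\sum_iY_i\ge T\Bigr)\le1-(1-1/T)^r,
\]
with equality iff every $Y_i$ has law $(1-1/T)\delta_0+(1/T)\delta_T$. The version you need (bound $1-(r/(r+1))^r$, equality iff all $Y_i$ are Bernoulli$(1/(r+1))$) is the paper's own Corollary. It does not follow by normalising. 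Rescaling by $M=\max_i\mu_i$ gives threshold $1/M$, which is below $r+1$ whenever $M>1/(r+1)$, and your hypothesis allows this (e.g.\ $r=2$, $(\mu_1,\mu_2)=(1/2,0)$). So your fallback of reducing to ``all $\mu_i\le1/(r+1)$'' has no mechanism as stated.

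\textbf{The missing idea is an affine shift.} With $A=\sum_i\mu_i$ and $M>0$, set $Y_i':=Y_i/M+1-\mu_i/M$ and $T:=r+(1-A)/M$. Each $Y_i'$ is nonnegative because $\mu_i\le M$, and has mean exactly $1$. Then $\{\sum_iY_i\ge1\}=\{\sum_iY_i'\ge T\}$, and $A+M\le1$ is precisely $T\ge r+1$. This gives $\Pp(\sum_iY_i\ge1)\le1-(1-1/T)^r\le1-(r/(r+1))^r$.

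Your compactness step needs the equality case for the limit law, so it must also be pulled back through the shift:
\begin{itemize}
\item Equality forces $T=r+1$, by strict monotonicity of $(1-1/T)^r$.
\item The CLZ equality case then gives each $Y_i'$ an atom at $0$. Since $Y_i'\ge1-\mu_i/M$, this forces $\mu_i=M$ for all $i$.
\item Hence $M=1/(r+1)$ and $Y_i=Y_i'/(r+1)$ is Bernoulli$(1/(r+1))$.
\item The case $M=0$ is trivial.
\end{itemize}
With this reduction supplied, the rest of your argument goes through.
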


A slight mysterious assumption in the lemma is the property of the fractional cover $ g(V(L))+\max_i g(V_i)\le n$, which is indeed natural and crucial to control the fractional cover when one part contributes significant weights. 
The stability structure will be extended from this link hypergraph to the entire $k$-graph by the additional property of the fractional covers.

We first prove \Cref{thm:fractional-nonclose} using
\Cref{thm:weighted-stability}; the proof of \Cref{thm:weighted-stability} is postponed to
\Cref{subsec:weighted-proof}.

\subsection{Proof of \Cref{thm:fractional-nonclose}}
\noindent\textbf{Proof outline.}
To prove \Cref{thm:fractional-nonclose}, we assume that \(H\) has no perfect fractional matching and use
linear programming duality to find
a vertex link satisfying the hypotheses of
\Cref{thm:weighted-stability}.
The minimum vertex-degree condition allows us to recover
the corresponding structure in the remaining class.
We then use this structure and the degree condition to show
that \(H\) is \(\varepsilon\)-contained in \(H_k^0(n;n)\),
contradicting the hypothesis.
\begin{proof}
Choose $0<\gamma\ll\xi\ll\varepsilon,k$.
Assume that $H$ has no perfect fractional matching.  Let $f$ be a
maximum fractional matching and $g$ be a minimum fractional vertex cover.
By linear programming duality, $f(E(H))=g(V(H))<n$.
Relabel the classes so that $g(V_1)=\max_i g(V_i)$.  
We claim that there exists some $v_0\in V_1$ such that $g(v_0)=0$. 
Indeed, otherwise $g(v)>0$ for every $v\in V_1$, so $\sum_{e\ni v} f(e)=1$ for every $v\in V_1$ by complementary of slackness. 
Summing over all $v\in V_1$ and using the fact that each edge of $H$ contains exactly one vertex of $V_1$, we obtain $n=\sum_{v\in V_1}\sum_{e\ni v}f(e)=\sum_{e\in E(H)}f(e)=f(E(H))$, a contradiction.

Let $L=N_H(v_0)$ be the link of $v_0$ on $V_2,\ldots,V_k$. 
If $e$ is an edge of $L$, then $e\cup\{v_0\}\in E(H)$ and thus $g(v_0)+\sum_{v\in e}g(v)\ge1$.
Since $g(v_0)=0$, we obtain that $\sum_{v\in e}g(v)\ge1$ and hence $g$ is also a vertex cover of $L$.
Note that $V(L)=V(H)\setminus V_1$ and $g(V(L))=g(V(H))-g(V_1)\le g(V(H))-\max_{2\le i\le k}g(V_i)$.
Thus, $ g(V(L))+\max_{2\le i\le k}g(V_i)
 \le g(V(H))<n$.
Also
\[
 e(L)=d_H(v_0)\ge\left(1-\left(\frac{k-1}{k}\right)^{k-1}-\gamma\right)n^{k-1}.
\]
Applying
\Cref{thm:weighted-stability} to $L$ with parameter
$\xi$, we obtain disjoint sets $W_i,X_i\subseteq V_i$ for
$i\in[2,k]$ such that
$|W_i| = (\frac{1}{k}\pm\xi)n,|X_i| = (\frac{k-1}{k}\pm\xi)n$, and $ g(w) \ge 1-\xi$ for every $w\in W_i$, $g(x) \le \xi$ for every $x\in X_i$.

Next, we recover a similar structure in $V_1$.  
Since $|W_i| = (\frac{1}{k}\pm\xi)n$ and $g(w)\ge1-\xi$ for every $w\in W_i\subseteq V_i$ and every $i\in[2,k]$, we have $g(V_i)\ge g(W_i)\ge(1-\xi)|W_i|\ge(1-\xi)(\frac1k-\xi)n$. 
Thus, $\sum_{i=2}^k g(V_i)\ge (k-1)(1-\xi)\left(\frac{1}{k}-\xi\right)n\ge \left(\frac{k-1}{k}-2k\xi\right)n$ and $g(V_1)\le(\frac{1}{k}+2k\xi)n$.  

Define
$ W_1:=\{u\in V_1:g(u)\ge1-(k-1)\xi\}$.
Because $\xi$ is sufficiently small,
\begin{equation}
\label{ine:W_1upper}
 |W_1|\le\frac{1/k+2k\xi}{1-(k-1)\xi}n
 \le(1/k+6k\xi)n.          
\end{equation}
For a lower bound of $|W_1|$, 
take a $k$-tuple $e'=\{v_1,x_2,\ldots,x_k\}$ where $v_1\in V_1\setminus W_1$ and $x_i\in X_i$ for $i\in[2,k]$.
Note that $e'\notin E(H)$ because $g(v_1)+g(x_2)+\cdots+g(x_k)<1$.
Set $q:=(k-1)/k$, $p:=1/k$, $c_k:=1-q^{k-1}$ and $w_1:=|W_1|/n$. Then we have
\[
 (c_k-\gamma)n^{k-1}
 \le d_H(x_k)
 \le n^{k-1}-(1-w_1)(q-\xi)^{k-2}n^{k-1}.
\]
Thus, $1-w_1\le\frac{q^{k-1}+\gamma}{(q-\xi)^{k-2}}$.
Let $F(x,y)=\frac{q^{k-1}+y}{(q-x)^{k-2}}$, it is easy to see that $F(x,y)$ is continuously differentiable near $(0,0)$ and $F(0,0)=q$.
Thus there is a constant $C_k$ such that, when $\gamma\le\xi$ are sufficiently small,
$1-w_1\le F(\xi,\gamma)\le  q+C_k\xi$ and thus $|W_1|\ge(p-C_k\xi)n$.
Combining with \eqref{ine:W_1upper} and the conclusions for
$W_2,\ldots,W_k$, we obtain that for each $i\in[k]$,
\begin{equation}
\label{ine:W_i}
|W_i| = \left(\frac{1}{k}\pm C_k\xi\right)n.              
\end{equation}

Recall that $d_i(m)=\left\lfloor\frac{m+i-1}{k}\right\rfloor$ in \eqref{eq:dim}.
For each $i\in[k]$, choose $W_i^0\subseteq V_i$ with
$|W_i^0|=d_i(n)$ and with symmetric difference from $W_i$ as small
as possible, and put $U_i=V_i\setminus W_i^0$.  Since
$|d_i(n)-pn|=O_k(1)$, \eqref{ine:W_i} gives
$ |W_i^0\triangle W_i|\le 2C_k\xi n$, as $n$ is sufficiently large.                

Define $Z_1:=V_1\setminus W_1$ and $Z_i:=X_i$, $2\le i\le k$.
The sets $W_i$ and $X_i$ are disjoint, and their sizes add to $(1\pm2\xi)n$.  
Therefore, 
\begin{equation}
\label{ine:sym}
\begin{aligned}
|U_1\triangle Z_1|&=|(V_1\setminus W_1^0)\triangle (V_1\setminus W_1)|=|W_1^0\triangle W_1|\le2C_k\xi n,\\ 
 |U_i\triangle Z_i|&\le|U_i\triangle (V_i\setminus W_i)|+|(V_i\setminus W_i)\triangle X_i|\le |W_i^0\triangle W_i|+|(V_i\setminus W_i)\setminus X_i|\le 3C_k\xi n,
 \end{aligned}
\end{equation}
where $i\in[2,k]$.
Moreover, the box $Z_1\times\cdots\times Z_k$ is independent in $H$:
since for every $k$-tuple in that box, $g(z_1)+\cdots+g(z_k)
 <1-(k-1)\xi+(k-1)\xi=1$.

Let $H^0$ be the copy of $H_k^0(n;n)$ determined by
$W_1^0,\ldots,W_k^0$.  
Towards the final contradiction let us upper bound $|E(H^0)\setminus E(H)|$.
Note that every edge of $H^0$ contains a vertex in some
$U_i$.  Fix $i\in [k]$ and $u\in U_i\cap Z_i$.  Since $u\in U_i$, an edge of $H$
through $u$ that is not in $H^0$ must choose all its other vertices
from the sets $U_j$.  By the independence of the $Z$-box and \eqref{ine:sym},
the number of such edges is at most $\sum_{j\ne i}|U_j\setminus Z_j|n^{k-2}\le 3kC_k\xi n^{k-1}$.
Thus, $|N_{H}(u)\setminus N_{H^0}(u)|\le 3kC_k\xi n^{k-1}$.
Also, $d_{H^0}(u)=n^{k-1}-\prod_{j\ne i}|U_j|
          \le c_kn^{k-1}+\gamma n^{k-1}$.
Therefore, as $\gamma\ll\xi$ we obtain
\[
 |N_{H^0}(u)\setminus N_H(u)|=d_{H^0}(u)-d_H(u)+|N_{H}(u)\setminus N_{H^0}(u)|\le 4kC_k\xi n^{k-1}.   \]
For $u\in U_i\setminus Z_i$ we use the trivial bound $n^{k-1}$ and 
by \eqref{ine:sym} there are at most $3C_k\xi n$ such vertices $u$.
Hence we get 
\[
 \sum_{u\in U_i}|N_{H^0}(u)\setminus N_H(u)|
 \le 4kC_k \xi n^{k-1}|U_i| + n^{k-1} 3C_k\xi n\le 5kC_k \xi n^k,  
\]
and summing over $i\in [k]$ gives $|E(H^0)\setminus E(H)|\le  5k^2C_k \xi n^k\le \varepsilon n^k$, as $\xi\ll \varepsilon$.
Thus $H$ is $\varepsilon$-contained in $H_k^0(n;n)$, a contradiction.
\end{proof}

\subsection{Proof of \Cref{thm:weighted-stability}}
\label{subsec:weighted-proof}

In this subsection we prove \Cref{thm:weighted-stability}.
We first recall a sharp tail inequality and its equality
characterization, due to Cao, Liu, and
Zhang~\cite[Theorem~2.5]{CLZ}.
For a random variable \(Z\), let \(\Law(Z)\) denote its
probability distribution.
For $a\in\mathbb R$, let $\delta_a$ denote the unit point mass at $a$.
Thus, \(\Law(Z)=\delta_a\) means that \(Z=a\) almost surely.
For \(a<b\) and \(0<x<1\), the identity $\Law(Z)=(1-x)\delta_a+x\delta_b$ means that $Z$ follows a \emph{two-point distribution} where $\mathbb P(Z=a)=1-x$ and $\mathbb P(Z=b)=x$.

\begin{theorem}[Cao, Liu and Zhang~\cite{CLZ}]
\label{thm:clz}
Let $r\ge1$ and $T\ge r+1$.  If $Y_1,\ldots,Y_r$ are independent,
nonnegative random variables with $\E Y_i\le1$, then
\[
 \Pp\left(\sum_{i=1}^rY_i\ge T\right)
 \le 1-\left(1-\frac1T\right)^r.
\]
Equality holds if and only if, for every $i\in [r]$, $\mathcal L(Y_i)=(1-1/T)\delta_0 + (1/T)\delta_T$.
\end{theorem}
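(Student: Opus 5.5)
Since the statement is a sharp Feige-type tail bound, the plan is an induction on $r$ that exploits the fact that, once all but one variable are fixed, the tail probability is linear in the law of the remaining variable. Let $M_r(s):=\sup \Pp(Y_1+\cdots+Y_r\ge s)$, where the supremum is over independent nonnegative $Y_i$ with $\E Y_i\le1$, and set $M_r(s)=1$ for $s\le 0$. Fix $Y_1,\ldots,Y_{r-1}$ and put $\varphi(y):=\Pp(Y_1+\cdots+Y_{r-1}\ge T-y)$. This $\varphi$ is nondecreasing in $y$, and $\Pp(\sum_i Y_i\ge T)=\E\varphi(Y_r)$. Maximizing $\E\varphi(Y)$ over laws on $[0,\infty)$ with $\E Y\le 1$ is a linear program with a single moment constraint. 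Its extreme points are laws with at most two atoms, and the optimum equals the value at $1$ of the least concave majorant of $\varphi$. Since $\varphi$ is monotone and $\varphi\le1$, we may place the lower atom at $0$. This gives the recursion
\[
 M_r(T)=\sup_{b\ge1}\Bigl[\Bigl(1-\frac1b\Bigr)M_{r-1}(T)+\frac1b\,M_{r-1}(T-b)\Bigr].
\]
Equality holds here because the supremum over the remaining variables is attained, or approached, jointly.

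Next I would check the candidate $b=T$, where $M_{r-1}(0)=1$. Inductively this gives $(1-1/T)\bigl(1-(1-1/T)^{r-1}\bigr)+1/T=1-(1-1/T)^r$. This step needs the induction hypothesis $M_{r-1}(T)=1-(1-1/T)^{r-1}$, which is valid because $T\ge r+1> r$. Taking $b>T$ is clearly worse, since the second term stays $1/b$ while the weight $1-1/b$ on $M_{r-1}(T)$ does not compensate. The real work is ruling out $1\le b<T$. In that range the threshold $T-b$ may drop below $r$, where the induction hypothesis does not apply. So the induction must carry a bound on $M_{r-1}(s)$ for \emph{all} $s>0$, not just $s\ge r$. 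I would first try the crude envelope $M_{r-1}(s)\le\min\{1,(r-1)/s\}$ (Markov) for small $s$, together with the exact formula for $s\ge r$. I would then verify the inequality
\[
\Bigl(1-\tfrac1b\Bigr)\Bigl(1-\bigl(1-\tfrac1T\bigr)^{r-1}\Bigr)+\tfrac1b\,M_{r-1}(T-b)\le 1-\bigl(1-\tfrac1T\bigr)^r
\]
by splitting into the cases $T-b\ge r$ and $T-b<r$.

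I expect this intermediate regime to be the main obstacle. It is exactly where Feige-type problems are hard, and Markov may be too weak when $T$ is close to $r+1$ and $T-b$ sits just below $r-1$. If Markov fails, I would strengthen the inductive hypothesis to a piecewise bound of the form $M_{r-1}(s)\le\max_{0\le j\le r-1}\bigl(1-(1-j/s')\ldots\bigr)$, built from ``$j$ variables each take a large value'' configurations. Alternatively I would import the sharp small-threshold estimates from the Feige-conjecture resolutions cited above. The case $r=1$ is just Markov's inequality, with equality exactly for the law $(1-1/T)\delta_0+(1/T)\delta_T$. For the equality characterization, I would track strictness through the induction: every $b\ne T$ gives a strict inequality in the recursion, and among laws with mean at most $1$ the two-point law on $\{0,T\}$ is the unique maximizer of $\E\varphi$. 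The latter holds because $\varphi$ jumps at $T$ and is strictly below its concave majorant elsewhere. Combining these with the inductive uniqueness for $Y_1,\ldots,Y_{r-1}$ forces every $Y_i$ to have this law.
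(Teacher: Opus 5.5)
The paper gives no proof of this theorem; it quotes it from Cao, Liu and Zhang, whose argument rests on the recent resolutions of Feige's conjecture. Judged on its own, your plan has a genuine gap at the recursion. Fix the law $(1-\frac1b)\delta_0+\frac1b\delta_b$ of $Y_r$. The quantity you must bound is
\[
\sup_{Y_1,\dots,Y_{r-1}}\Bigl[\Bigl(1-\frac1b\Bigr)\Pp(S_{r-1}\ge T)+\frac1b\,\Pp(S_{r-1}\ge T-b)\Bigr],\qquad S_{r-1}:=Y_1+\cdots+Y_{r-1}.
\]
The two probabilities are maximized by incompatible choices: two-point laws on $\{0,T\}$ for the first, and e.g.\ $Y_i\equiv1$ for the second. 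So replacing this by $(1-\frac1b)M_{r-1}(T)+\frac1b M_{r-1}(T-b)$ is only an upper bound, not an equality.

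That upper bound is too weak to give the theorem. Take $b=T-r+1\ge2$. Then $M_{r-1}(T-b)=M_{r-1}(r-1)=1$, since $Y_i\equiv1$ attains it. With $c:=1-(1-\frac1T)^{r-1}<1$, your right-hand side is at least $c+(1-c)/b$. This is strictly larger than $c+(1-c)/T=1-(1-\frac1T)^r$, the value at $b=T$. For $r=2$ it is the explicit value $\frac2T>\frac2T-\frac1{T^2}$.

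So the inequality you plan to verify in the case $T-b<r$ is false. Neither Markov nor any sharper envelope for $M_{r-1}(s)$, even exact values from the Feige literature, can rescue it, because $M_{r-1}(r-1)=1$ is already exact. The obstruction is the decoupling of the suprema, not the weakness of Markov. Your equality characterization fails for the same reason: it needs every $b\neq T$ to be strictly suboptimal in the recursion.

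A second unjustified step is placing the lower atom at $0$. Monotonicity of $\varphi$ does not suffice, even for the $\varphi$ that arises here. Take $r=2$, $T=3$ and $\Pp(Y_1=\frac52)=\frac25=1-\Pp(Y_1=0)$. Then $\varphi$ is $0$ on $[0,\frac12)$, $\frac25$ on $[\frac12,3)$ and $1$ on $[3,\infty)$. The law $\frac45\delta_{1/2}+\frac15\delta_3$ has mean $1$ and gives $\E\varphi(Y_2)=\frac{13}{25}$. Every law with lower atom $0$, or with a single atom in $[0,1]$, gives at most $\frac25$.

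A correct induction therefore has to carry joint information about the whole tail profile $s\mapsto\Pp(S_{r-1}\ge s)$ for a single choice of $(Y_1,\dots,Y_{r-1})$, not pointwise suprema $M_{r-1}(s)$. Controlling that coupled optimization is exactly the hard part of Feige-type inequalities, and it is what the cited work supplies.
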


We next derive a consequence of this theorem that will be used
in the proof of \Cref{thm:weighted-stability}.
For $r\ge2$, let
$p_r:=\frac1{r+1}$, $q_r:=\frac r{r+1}$,
$b_r:=q_r^r$, $\beta_r:=q_r\delta_0+p_r\delta_1$.

\begin{corollary}
\label{lem:prob-rigidity}
Let $Z_1,\ldots,Z_r$ be independent random variables taking values in $[0,1]$, and set $a_i=\E Z_i$, $A=\sum_i a_i$, $M=\max_i a_i$.
If $A+M\le1$, then
\[
 \Pp\left(\sum_iZ_i<1\right)\ge b_r.
\]
Equality holds if and only if $\Law(Z_i)=\beta_r$ for every $i$.
\end{corollary}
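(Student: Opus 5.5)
The plan is to reduce \Cref{lem:prob-rigidity} to \Cref{thm:clz} with $T=r+1$, by rescaling the $Z_i$ so that the event $\{\sum_i Z_i\ge1\}$ becomes an event of the form $\{\sum_i Y_i\ge T\}$. Passing to complements, the claim $\Pp(\sum_iZ_i<1)\ge b_r$ is equivalent to
\[
\Pp\Bigl(\sum_iZ_i\ge1\Bigr)\le 1-\Bigl(1-\frac1{r+1}\Bigr)^r .
\]
I will split according to the size of $M$.

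\emph{Main case: $M\le \frac1{r+1}$.} Put $Y_i:=(r+1)Z_i$. These are independent and nonnegative, and $\E Y_i=(r+1)a_i\le1$. Moreover $\sum_iZ_i\ge1$ holds exactly when $\sum_iY_i\ge r+1$. So \Cref{thm:clz} with $T=r+1$ gives the inequality directly. For equality, the equality clause of \Cref{thm:clz} forces $\Law(Y_i)=q_r\delta_0+p_r\delta_{r+1}$, that is, $\Law(Z_i)=\beta_r$ for every $i$. Conversely, if every $\Law(Z_i)=\beta_r$, then $\Pp(\sum_iZ_i<1)=\Pp(\text{all }Z_i=0)=q_r^r=b_r$. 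In that case $A=r/(r+1)$ and $M=1/(r+1)$, so $A+M=1$ and the hypothesis holds. This settles the equality characterization, provided the other case only gives strict inequality.

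\emph{Second case: $M>\frac1{r+1}$.} Say $a_1=M$. Then $A\le 1-M$, so the remaining variables have total mean $A-M\le 1-2M<\frac{r-1}{r+1}$. I would condition on $Z_1=z\in[0,1]$:
\[
\Pp\Bigl(\sum_iZ_i\ge1\Bigr)=\E\Bigl[\phi(Z_1)\Bigr],\qquad \phi(z):=\Pp\Bigl(\sum_{i\ge2}Z_i\ge 1-z\Bigr).
\]
To bound $\phi(z)$, apply \Cref{thm:clz} to the $r-1$ variables $Y_i:=Z_i/\max\{a_i,\,(1-z)/T\}$, with $T=r$. Alternatively, use Markov's inequality when $1-z$ is small, since trivially $\phi\le1$. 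The remaining task is a one-variable optimization: maximize $\E\phi(Z_1)$ over laws of $Z_1$ on $[0,1]$ with mean $M$, where $\phi$ now has an explicit upper bound. I expect the extremal law of $Z_1$ to be two-point on $\{0,1\}$. Then the bound becomes roughly
\[
M+(1-M)\Bigl(1-\bigl(1-\tfrac{1-2M}{r-1}\bigr)^{r-1}\Bigr),
\]
and I would check that this is strictly less than $1-q_r^r$ whenever $M>\frac1{r+1}$ and $A+M\le1$.

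I expect this second case to be the main obstacle. The reason is that the naive scaling $Y_i=Z_i/M$ only yields $1-(1-M)^r$, which is too weak. So the constraint $A+M\le1$ has to be exploited through the conditioning, and one must verify the resulting elementary inequality carefully, including strictness so that no new equality cases appear. If the conditioning bound turns out to be lossy, the fallback is a smoothing argument. Split $Z_1$ into two independent pieces of mean at most $\frac1{r+1}$, or move mass from $Z_1$ to the smaller variables. Show that such a move does not decrease $\Pp(\sum_iZ_i\ge1)$ while keeping $A+M\le1$, which reduces everything to the main case.
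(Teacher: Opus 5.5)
Your main case $M\le\frac1{r+1}$ is correct. With $Y_i=(r+1)Z_i$ one has $\E Y_i\le1$ and $\{\sum_iZ_i\ge1\}=\{\sum_iY_i\ge r+1\}$, so \Cref{thm:clz} with $T=r+1$ gives both the bound and the equality case.

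The gap is the case $M>\frac1{r+1}$, which you leave as a plan that does not work as stated.
\begin{enumerate}
\item The rescaling $Y_i:=Z_i/c_i$ with $c_i:=\max\{a_i,(1-z)/T\}$ gives $Y_i\le TZ_i/(1-z)$. So $\{\sum_{i\ge2}Z_i\ge1-z\}$ is \emph{not} contained in $\{\sum_{i\ge2}Y_i\ge T\}$ as soon as some $a_i>(1-z)/T$, and then \Cref{thm:clz} gives no bound on $\phi(z)$.
\item The displayed target bound is false. Take $r=3$, $Z_1,Z_2$ independent Bernoulli$(\frac13)$ and $Z_3\equiv0$. Then $A+M=1$, $M>\frac14$, and $Z_1$ is already two-point on $\{0,1\}$. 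Here $\Pp(\sum_iZ_i\ge1)=\frac59=\frac{30}{54}$, whereas your expression equals $\frac13+\frac23\bigl(1-\frac{25}{36}\bigr)=\frac{29}{54}$. Spreading the remaining mean evenly is not the worst case, so the ``one-variable optimization'' is not the right reduction.
\item The smoothing fallback is unsupported. In particular, splitting $Z_1$ into two summands leaves you with $r+1$ variables, to which your main case applies only with the weaker constant $b_{r+1}<b_r$.
\end{enumerate}
Your equality characterization rests on strictness in this case, so both halves of the corollary remain unproved for $M>\frac1{r+1}$.

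The missing idea is to use \Cref{thm:clz} for all $T\ge r+1$, after an \emph{affine} change of variables that equalizes the means. For $M>0$ set $Y_i:=Z_i/M+1-a_i/M$ and $T:=r+(1-A)/M$. Then:
\begin{itemize}
\item $Y_i\ge0$ because $a_i\le M$, and $\E Y_i=1$;
\item $\sum_iY_i\ge T$ if and only if $\sum_iZ_i\ge1$;
\item the hypothesis $A+M\le1$ is precisely $T\ge r+1$.
\end{itemize}
Hence $\Pp(\sum_iZ_i<1)\ge(1-1/T)^r\ge b_r$, with no case distinction.

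For equality, strict monotonicity of $(1-1/T)^r$ in $T$ forces $T=r+1$. The equality clause then gives $\Pp(Y_i=0)=q_r>0$. Since $Y_i\ge1-a_i/M\ge0$, this forces $a_i=M$ for all $i$, so $T=1/M=r+1$ and $Z_i=Y_i/(r+1)$ has law $\beta_r$. This is the paper's proof; the case $M=0$ is trivial.
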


\begin{proof}
If $M=0$, then every $Z_i$ vanishes almost surely and the assertion is strict.  
Suppose $M>0$ and define $Y_i:=Z_i/M+1-a_i/M$, and $T:=r+(1-A)/M$.
Then $Y_i\ge0$, $\E Y_i=1$, and $T\ge r+1$.  
Moreover, we have
$\sum_iY_i\ge T$ if and only if $\sum_iZ_i\ge1$.
By \Cref{thm:clz}, we obtain
\[
\Pp\left(\sum_iZ_i<1\right)
 \ge\left(1-\frac1T\right)^r
 \ge\left(1-\frac1{r+1}\right)^r=b_r.
\]
If equality holds, strict monotonicity in $T$ gives $T=r+1$, and the
equality statement of \Cref{thm:clz} gives
\[
  \Law(Y_i)=q_r\delta_0+p_r\delta_{r+1}.
\]
Since $Y_i\ge1-a_i/M\ge0$ and $\mathbb{P}(Y_i=0)=q_r>0$, we must have $1-a_i/M=0$.
Hence $a_i=M$ for every $i$. Consequently,
$A=rM$ and $T=1/M=r+1$, so
$M=1/(r+1)$.  
Since $Y_i=Z_i/M+1-a_i/M$, we have 
$Z_i=(Y_i-1)M+a_i=MY_i= Y_i/(r+1)$ and thus $\Law(Z_i)=q_r\delta_0+p_r\delta_1=\beta_r$.  
The converse is immediate.
\end{proof}

Let $\Wass$ denote the $1$-Wasserstein distance on probability laws on $[0,1]$. 
Explicitly, for $\mu,\nu\in\cP([0,1])$, let $\Pi(\mu,\nu)$ be the set of Borel probability measures on $[0,1]^2$ whose first and second marginals are $\mu$ and $\nu$, respectively, and define
\[
 \Wass(\mu,\nu)
 :=\inf_{\pi\in\Pi(\mu,\nu)}
   \int_{[0,1]^2}|x-y|\,\dd\pi(x,y).
\]

We derive the following stability version of \Cref{lem:prob-rigidity} by a compactness argument.
Note that similar standard proofs appeared in \cite{CLZ}.

\begin{lemma}[Compactness stability]\label{lem:prob-stability}
For every $r\ge2$ and every $\rho>0$, there is $\eta>0$ such that the following holds.  If independent  $[0,1]$-valued variables $Z_1,\ldots,Z_r$ satisfy $\sum_i\E Z_i+\max_i\E Z_i\le1$ and $\Pp\left(\sum_iZ_i<1\right)\le b_r+\eta$,
then
\begin{equation}
\label{eq:maxrho}
 \max_i\Wass(\Law(Z_i),\beta_r)<\rho.
\end{equation}
\end{lemma}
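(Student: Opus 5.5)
We argue by contradiction, using compactness of $\cP([0,1])$ under $\Wass$, which metrizes weak convergence on the compact interval $[0,1]$. Suppose the statement fails for some $r\ge2$ and $\rho>0$. Then for each $m\ge1$ there are independent $[0,1]$-valued variables $Z^{(m)}_1,\ldots,Z^{(m)}_r$ with the following properties:
\begin{itemize}
\item $\sum_i\E Z^{(m)}_i+\max_i\E Z^{(m)}_i\le1$;
\item $\Pp(\sum_iZ^{(m)}_i<1)\le b_r+1/m$;
\item $\Wass(\Law(Z^{(m)}_{i_m}),\beta_r)\ge\rho$ for some index $i_m$.
\end{itemize}
Passing to a subsequence, we may assume $i_m=i^*$ is constant. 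By Prokhorov's theorem, since $[0,1]$ is compact, we may further assume $\mu^{(m)}_i:=\Law(Z^{(m)}_i)\to\mu_i$ weakly for every $i$. Let $Z_1,\ldots,Z_r$ be independent with $\Law(Z_i)=\mu_i$. Since $x\mapsto x$ is bounded and continuous on $[0,1]$, the means converge, so the constraint $\sum_i\E Z_i+\max_i\E Z_i\le1$ passes to the limit. Because $\Wass$ metrizes weak convergence on $[0,1]$, we also get $\Wass(\mu_{i^*},\beta_r)\ge\rho$.

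The key step is to pass the probability bound to the limit. The product measures $\mu^{(m)}_1\otimes\cdots\otimes\mu^{(m)}_r$ converge weakly to $\mu_1\otimes\cdots\otimes\mu_r$, so $S^{(m)}:=\sum_iZ^{(m)}_i$ converges in law to $S:=\sum_iZ_i$. The set $\{s<1\}$ is open, so the Portmanteau theorem gives
\[
\Pp(S<1)\le\liminf_{m\to\infty}\Pp(S^{(m)}<1)\le b_r .
\]
The inequality runs in exactly the direction we need. Combined with \Cref{lem:prob-rigidity}, which gives $\Pp(S<1)\ge b_r$, we obtain equality. The equality case of \Cref{lem:prob-rigidity} then forces $\mu_i=\beta_r$ for all $i$, contradicting $\Wass(\mu_{i^*},\beta_r)\ge\rho$.

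The main obstacle is this semicontinuity step. Weak convergence does not give convergence of $\Pp(S^{(m)}<1)$ itself, because the limit $S$ may place mass at $1$; indeed $\beta_r$-type limits do. The point is to check that the open-set half of the Portmanteau theorem is the half we need. Since the hypothesis bounds $\Pp(S^{(m)}<1)$ from above, lower semicontinuity on the open set $\{s<1\}$ transfers the upper bound to the limit, and no boundary issue arises.

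The remaining ingredients are standard. Independence is preserved under weak limits of products, since marginal weak convergence implies weak convergence of the products on a compact product space, for instance by density of products of continuous functions via Stone–Weierstrass. The constant $\eta$ depends only on $r$ and $\rho$ because the contradiction argument quantifies over all laws.
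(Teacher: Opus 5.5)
Your proposal is correct and takes essentially the same route as the paper. Both arguments take a sequence of counterexamples, use weak compactness of $\cP([0,1])$ and the open-set half of the Portmanteau theorem to pass $\Pp(\sum_i Z_i<1)\le b_r$ to the product limit, and then apply the equality case of the rigidity corollary. Fixing the violating index $i^*$ and carrying $\Wass(\mu_{i^*},\beta_r)\ge\rho$ to the limit only spells out a step the paper leaves implicit.
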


\begin{proof}
Otherwise there is a sequence of counterexamples for which the displayed
probability tends to $b_r$ and each member violates \eqref{eq:maxrho}.  
By compactness of $\cP([0,1])$, we may pass to a
subsequence on which every marginal law converges weakly, say to
$\mu_i$.  The product laws converge weakly to the limit
$\mu:=\mu_1\otimes\cdots\otimes\mu_r$, and the mean constraint passes to the
limit, so that \Cref{lem:prob-rigidity} applies to independent $[0,1]$-valued variables with laws $\mu_1,\dots, \mu_r$.  
As the set
\[
 \Lambda:=\{(z_1,\ldots,z_r):z_1+\cdots+z_r<1\}
\]
is open, Portmanteau's Theorem gives that its limiting product measure $\int_\Lambda d\mu$ is at
most $b_r$.  By \Cref{lem:prob-rigidity} it is at least $b_r$; hence
$\int_\Lambda d\mu = b_r$.
The equality classification yields $\mu_i=\beta_r$
for every $i$.  On the compact interval $[0,1]$, weak convergence is
equivalent to convergence in $1$-Wasserstein distance, contradicting
\eqref{eq:maxrho}. 
\end{proof}

Now we derive \Cref{thm:weighted-stability} from \Cref{lem:prob-stability}.

\begin{proof}[Proof of \Cref{thm:weighted-stability}]
Choose a uniformly random vertex $x_i\in V_i$, independently in the $r$ classes, and put $Z_i=g(x_i)$. We first need to verify the conditions of \Cref{lem:prob-stability}. 
It is easy to see that $\mathbb EZ_i=\frac1n\sum_{v\in V_i}g(v)=\frac1ng(V_i)$ and thus $\sum_{i\in[r]}\mathbb EZ_i=\frac1ng(V(L))$, $\max_i\mathbb EZ_i=\frac1n\max_ig(V_i)$.
Since $g(V(L))+\max_ig(V_i)\le n$, we have $\sum_{i\in[r]}\mathbb EZ_i+\max_i\mathbb EZ_i\le1$.
By the definition of fractional vertex cover, we obtain that if $\{x_1,\ldots,x_r\}\in E(L)$, then $Z_1+\cdots+Z_r\ge1$.
Choose $\rho=\xi^2/4$ and let $\eta$ be given by \Cref{lem:prob-stability}. 
Thus $\mathbb P(\sum_i Z_i<1)\le\mathbb P(\{x_1,\ldots,x_r\}\notin E(L))\le1-\frac{e(L)}{n^r}\le b_r+\eta$.
By \Cref{lem:prob-stability}, we have
$\Wass(\Law(Z_i),\beta_r)<\rho$ for every $i$.
Write \(\mu_i:=\Law(Z_i)\).
For any \(1\)-Lipschitz function
\(\varphi:[0,1]\to\mathbb R\) and any coupling
\(\pi\in\Pi(\mu_i,\beta_r)\), the marginal conditions give
\[
\begin{aligned}
  \left|
    \int_{[0,1]}\varphi\,\dd\mu_i
    -
    \int_{[0,1]}\varphi\,\dd\beta_r
  \right|
  =
  \left|
    \int_{[0,1]^2}
    \bigl(\varphi(x)-\varphi(y)\bigr)\,\dd\pi(x,y)
  \right|
  \le
  \int_{[0,1]^2}|x-y|\,\dd\pi(x,y).
\end{aligned}
\]
Taking the infimum over all such couplings yields
\[
  \left|
    \E\varphi(Z_i)
    -
    \int_{[0,1]}\varphi\,\dd\beta_r
  \right|
  \le \Wass(\mu_i,\beta_r)<\rho.
\]
Now the functions  \(z\mapsto z\) and
\(z\mapsto\min\{z,1-z\}\)
are both \(1\)-Lipschitz on \([0,1]\).
Since \(\beta_r=q_r\delta_0+p_r\delta_1\), we have
\[
  |\E Z_i-p_r|<\rho,
  \qquad
  \E\min\{Z_i,1-Z_i\}<\rho.
\]
Let $A_i=\{v\in V_i:g(v)>1/2\}$,
$W_i=\{v\in V_i:g(v)\ge1-\xi\}$, and
$X_i=\{v\in V_i:g(v)\le\xi\}$.  The pointwise identity $|\one_{\{z>1/2\}}-z|=\min\{z,1-z\}$
implies that   
\begin{equation}
\label{ine:a_i}
||A_i|/n-p_r| = |\E\one_{\{Z_i>1/2\}}-\E Z_i+ \E Z_i-p_r|\le \E\min\{Z_i, 1-Z_i\} + \rho <2\rho.
\end{equation}
Note that we have $\frac{|A_i\setminus W_i|}n + \frac{|(V_i\setminus A_i)\setminus X_i|}n = 1-\frac{|W_i|+|X_i|}{n} = \Pp(Z_i \in (\xi, 1-\xi))$.
By Markov's inequality, we get
\[
 \frac{|A_i\setminus W_i|}n + \frac{|(V_i\setminus A_i)\setminus X_i|}n= \Pp(\min\{Z_i,1-Z_i\} > \xi) \le\frac\rho\xi,
\]
together with \eqref{ine:a_i} and $2\rho+\rho/\xi<\xi$, all asserted size estimates follow.
\end{proof}

\section{Hypergraphs not close to \texorpdfstring{$H_k^0(n;n)$}{the extremal construction}}\label{sec:nonclose}

In this section, we prove the non-close case.  We shall use the
following rainbow fractional matching theorem of Aharoni, Holzman and
Jiang~\cite{AHJ}.

\begin{theorem}[Aharoni, Holzman and Jiang, \cite{AHJ}]\label{thm:ahj}
Let $r\ge2$ be an integer and let $x$ be a positive rational number.
Let $G_1,\ldots,G_{\lceil rx\rceil}$ be $r$-graphs satisfying
$\nuast(G_i)\ge x$ for every $i\in[\lceil rx\rceil]$.  Then there are
edges $e_i\in E(G_i)$, one for each $i\in[\lceil rx\rceil]$, such that
the $r$-graph with edge set
$\{e_1,\ldots,e_{\lceil rx\rceil}\}$ has a fractional matching of
size $x$.
\end{theorem}

The next lemma provides the absorbing matching used in the argument.

\begin{lemma}[Lo and Markstr\"om, \cite{LM}]\label{thm:absorber}
Fix \( k \ge 2 \), \( 1 \le \ell < k \), and \( 0 < \alpha < 1/(10k^3) \), and set \( \alpha' = \alpha^{2k-1}/20 \). Then for all sufficiently large \( n \), every balanced \( k \)-partite \( k \)-graph \( H \) satisfying 
\[
\delta_\ell(H) \ge \left( \frac{1}{2} + \alpha \right) n^{k-\ell}
\] 
contains a matching \( M_{\rm abs} \) of size at most \( (k-1)\alpha^k n \) such that for every balanced subset \( W \subseteq V(H) \setminus V(M_{\rm abs}) \) with \( |W| \le k\alpha' n \), the induced subgraph \( H[V(M_{\rm abs}) \cup W] \) admits a perfect matching.
\end{lemma}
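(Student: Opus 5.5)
We would prove \Cref{thm:absorber} by the standard absorbing method of R\"odl, Ruci\'nski and Szemer\'edi, adapted to the partite setting. Throughout, an \emph{absorber} is attached to each legal $k$-set $S=\{s_1,\ldots,s_k\}$ with $s_i\in V_i$. The only property of $H$ we use is the vertex-degree bound $\delta_1(H)\ge n^{\ell-1}\delta_\ell(H)\ge(\frac12+\alpha)n^{k-1}$. It follows by summing $d_H(S')$ over the legal $\ell$-sets $S'\ni v$: each edge through $v$ is counted $\binom{k-1}{\ell-1}$ times, and the number of such $S'$ is at least $\binom{k-1}{\ell-1}n^{\ell-1}$. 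The key consequence is that any two vertices $u,u'\in V_i$ have at least $2\alpha n^{k-1}$ common neighbours in their links, which are $(k-1)$-partite on $\bigcup_{j\ne i}V_j$.

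\textbf{Absorbers.} For a legal $S$, call a set $T$ of $k^2$ vertices an $S$-absorber if it has the following form. There is an edge $e=\{v_1,\ldots,v_k\}$ with $v_i\in V_i\setminus S$. For each $i$ there is a legal $(k-1)$-set $f_i\subseteq\bigcup_{j\ne i}V_j$ with both $f_i\cup\{v_i\}$ and $f_i\cup\{s_i\}$ in $E(H)$. All of these sets are pairwise disjoint and disjoint from $S$, and $T=e\cup f_1\cup\cdots\cup f_k$. Then $T$ is balanced. Moreover, $H[T]$ has the perfect matching $\{f_i\cup\{v_i\}\}_{i\in[k]}$, and $H[T\cup S]$ has the perfect matching $\{e\}\cup\{f_i\cup\{s_i\}\}_{i\in[k]}$.

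\textbf{Counting absorbers.} We choose $e$ among at least $(\frac12+\alpha)n^{k-1}\cdot n-O(n^{k-1})\ge n^k/3$ edges avoiding $S$. Then, for each $i$, we choose $f_i$ in the common link of $v_i$ and $s_i$; there are at least $2\alpha n^{k-1}$ choices, and only $O(n^{k-2})$ of them meet previously chosen vertices. This gives at least $c\,\alpha^k n^{k^2}$ $S$-absorbers for some $c=c(k)>0$, that is, a fraction of order $\alpha^k$ of all balanced $k^2$-sets.

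\textbf{Random selection.} We select each balanced $k^2$-set independently with probability $p=\theta n^{1-k^2}$, where $\theta$ is a small constant times $\alpha^k$ chosen so that the final matching has size at most $(k-1)\alpha^k n$. By Chernoff and Markov bounds, with positive probability three things hold. First, the family $\mathcal F$ has size at most $2\theta n^{k^2}\cdot n^{1-k^2}/(\text{const})$, i.e.\ linear in $n$ with a small constant. Second, the number of intersecting pairs in $\mathcal F$ is $O(\theta^2 n)$. Third, every legal $S$ has at least $\frac12 c\alpha^k\theta n$ absorbers in $\mathcal F$. We then delete one member of each intersecting pair and every member that has no perfect matching. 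After this, each $S$ still has at least $\alpha' n$ absorbers, where the constants are arranged so that $\alpha'=\alpha^{2k-1}/20$ works; there is ample room since $\alpha<1/(10k^3)$. Finally, we let $M_{\rm abs}$ be the union of the perfect matchings of the surviving sets. Since each set has $k^2$ vertices, each contributes $k$ edges, and $|M_{\rm abs}|\le(k-1)\alpha^k n$ once $\theta$ is chosen suitably.

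\textbf{Absorbing.} Let $W$ be balanced with $|W|\le k\alpha'n$ and disjoint from $V(M_{\rm abs})$. We partition $W$ arbitrarily into at most $\alpha' n$ legal $k$-sets. Processing them one at a time, each is assigned its own unused absorber; this is possible since each has at least $\alpha'n$ absorbers available. We then switch the matching on each used absorber and keep the original matchings on the rest, which yields a perfect matching of $H[V(M_{\rm abs})\cup W]$.

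We expect the main obstacle to be the bookkeeping of constants: the absorber count, the choice of $p$, and the deletion step must simultaneously give $|M_{\rm abs}|\le(k-1)\alpha^kn$ and at least $\alpha'n$ absorbers per $S$. The second step, counting absorbers, is the only place where the degree condition enters, and it relies solely on the $\frac12+\alpha$ common-neighbourhood estimate.
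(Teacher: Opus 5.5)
The paper does not prove this lemma; it quotes it from Lo and Markstr\"om, so I compare your argument with the statement itself.

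\textbf{Where the argument falls short.} Your overall scheme is right, and its ingredients are correct. The bound $\delta_\ell\ge(\frac12+\alpha)n^{k-\ell}$ gives $\delta_1\ge(\frac12+\alpha)n^{k-1}$, two vertices of $V_i$ share at least $2\alpha n^{k-1}$ link tuples, and your $k^2$-vertex gadget is a valid absorber. The gap is quantitative, and it sits in the step you wave through (``the constants are arranged so that $\alpha'=\alpha^{2k-1}/20$ works; there is ample room''). Your gadget uses $k$ edges and pays a factor $2\alpha$ for each of $f_1,\ldots,f_k$. So your count only guarantees that about a $\frac12(2\alpha)^k=2^{k-1}\alpha^k$ fraction of balanced $k^2$-sets absorb $S$. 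The requirement $|M_{\rm abs}|\le(k-1)\alpha^kn$ caps the family at $\frac{k-1}{k}\alpha^kn$ sets. Hence, even before deleting overlaps, you can guarantee only about $2^{k-1}\alpha^{2k}n$ absorbers per $S$. This is less than $\alpha'n=\alpha^{2k-1}n/20$ whenever $\alpha<2^{-k}/10$, in particular for all small $\alpha$. The hypothesis $\alpha<1/(10k^3)$ cannot recover the missing factor $\alpha$; its role is only to make the roughly $k^3\alpha^{2k}n$ overlapping pairs negligible. As written, you prove the lemma only with $\alpha'=c_k\alpha^{2k}$. That weaker form would still serve the application in \Cref{thm:nonclose}, where only some $\alpha'>0$ with $\eta\ll\alpha'$ is needed, but it is not the stated lemma.

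\textbf{The fix.} Use a leaner absorber with only $k-1$ edges. It saves one factor of $\alpha$ and also explains the factor $k-1$ in the bound on $|M_{\rm abs}|$. For $S=\{s_1,\ldots,s_k\}$, choose a legal $(k-1)$-set $g=\{u_2,\ldots,u_k\}$ with $g\cup\{s_1\}\in E(H)$. For each $j\in[2,k]$, choose a legal $(k-1)$-set $f_j\subseteq\bigcup_{i\ne j}V_i$ in the common link of $u_j$ and $s_j$. Make all these sets pairwise disjoint and disjoint from $S$. Then $T=g\cup f_2\cup\cdots\cup f_k$ has $k-1$ vertices in each class. Moreover, $\{f_j\cup\{u_j\}\}_{j=2}^k$ is a perfect matching of $H[T]$, and $\{g\cup\{s_1\}\}\cup\{f_j\cup\{s_j\}\}_{j=2}^k$ is a perfect matching of $H[T\cup S]$. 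There are at least $(\frac12-o(1))n^{k-1}\bigl((2\alpha-o(1))n^{k-1}\bigr)^{k-1}$ such configurations, so about a $2^{k-2}\alpha^{k-1}$ fraction of balanced $k(k-1)$-sets absorb $S$. Select each such set with probability chosen so that the expected family size is $\alpha^kn/2$. With high probability this gives $|M_{\rm abs}|\le(k-1)\alpha^kn$ and at least $2^{k-4}\alpha^{2k-1}n$ absorbers for every $S$. With probability at least $1/2$ there are at most $\frac{k(k-1)^2}{4}\alpha^{2k}n<\alpha^{2k-1}n/40$ overlapping pairs; this is where $\alpha<1/(10k^3)$ enters. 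After deleting overlaps, at least $\alpha^{2k-1}n/20$ absorbers survive for every $S$, and your greedy absorbing step then finishes the proof.
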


To obtain an almost perfect matching in a sparse quasi-regular
hypergraph, we use the following theorems of Pippenger and Spencer~\cite{PS}.

\begin{theorem}[Pippenger and Spencer,~\cite{PS}]\label{thm:fr}
For every integer $k\ge2$ and every $\sigma>0$, there exist $\tau>0$ and $D_0$ such that the following holds.  Let $J$ be an $n$-vertex $k$-graph and let $D$ satisfy $n\ge D\ge D_0$.  
If for each vertex $v\in V(J)$, we have $d(v)=(1\pm\tau)D$
and for every pair $\{u,v\}$ of vertices of $J$, we have
$d(u,v)<\tau D$,
then $J$ contains a matching covering all but at most $\sigma n$ vertices.
\end{theorem}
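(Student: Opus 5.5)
This is the Pippenger–Spencer theorem, quoted from the literature, so the paper will use it as a black box; if I had to prove it myself, I would use the Rödl nibble. Fix a small $\epsilon>0$, depending on $k$ and $\sigma$. Run rounds in which each surviving edge is selected independently with probability $\epsilon/D_t$, where $D_t$ is the current common degree. Keep the selected edges that are disjoint from all other selected edges, and delete the vertices they cover.

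The key steps, in order, are as follows.

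\emph{One-round analysis.} For a single round, compute the expected behaviour of each quantity.
\begin{itemize}
\item A vertex $v$ is covered with probability about $\epsilon e^{-k\epsilon}$. This uses $d(v)=(1\pm\tau)D$, together with $d(u,v)<\tau D$ to show that the edges meeting a fixed edge are nearly disjoint in their contributions.
\item A surviving vertex keeps its degree close to $D_{t+1}=D_t e^{-(k-1)\epsilon e^{-k\epsilon}}$. Again the small codegrees make the survival events of the other vertices of an edge almost independent.
\end{itemize}

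\emph{Concentration.} Show the new degrees and codegrees are concentrated, so the hypotheses recur with a slightly worse error $\tau_{t+1}\le C\tau_t+D_t^{-c}$. I would do this with Talagrand's or Azuma's inequality, using that each degree depends on few selections. A union bound cannot be taken over all vertices when $n$ is huge relative to $D$. I would avoid it via a local argument: Lovász Local Lemma, or simply note that we only need the expected number of bad vertices to be small and can delete them.

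\emph{Iteration and counting.} Iterate for $t_0$ rounds with $t_0$ fixed, depending only on $\epsilon,k,\sigma$. After $t_0$ rounds the uncovered fraction is about $\prod_t(1-\epsilon e^{-k\epsilon})\le\sigma/2$. Choose $\tau$ and $D_0$ so that the accumulated error over the $t_0$ rounds stays small. At the end, the vertices that became bad along the way contribute at most $\sigma n/2$.

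The main obstacle is controlling error propagation, specifically the codegree step. Codegrees must stay $o(D_t)$ so that the near-independence in the one-round analysis persists. This holds because a codegree can only decrease while $D_t$ shrinks by a constant factor per round, and there are only boundedly many rounds. The bad-vertex accounting, needed since $n$ may exceed any function of $D$, also requires care. I would handle it by bounding the expected number of bad vertices and applying Markov's inequality.
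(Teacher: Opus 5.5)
The paper gives no proof of this statement: it is quoted from Pippenger and Spencer and used as a black box. Your R\"odl-nibble outline is the standard proof, as in Alon and Spencer's textbook treatment, and its overall structure is right: boundedly many rounds $t_0=t_0(\epsilon,k,\sigma)$, $\tau$ and $D_0$ chosen after $t_0$, codegrees controlled by monotonicity, and exceptional vertices controlled only in expectation.

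\textbf{The concentration step fails as stated.} Assuming only $d(u,v)<\tau D$, the recursion $\tau_{t+1}\le C\tau_t+D_t^{-c}$ is false, even for most vertices. For $k=3$, take parts $X,Y,Z$ and a bipartite graph $G$ on $X\cup Y$ that is regular of degree about $1/\tau$. Let each $z\in Z$ span hyperedges with the pairs of a size-$D$ matching of $G$, arranged so that each edge of $G$ lies in about $\tau D$ hyperedges. This hypergraph is $D$-regular, with $d(x,y)\approx\tau D$ for $xy\in G$ and all other codegrees at most $1$. The link of $x\in X$ is about $1/\tau$ disjoint stars with $\tau D$ leaves each. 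The number of star centres covered in one round has standard deviation about $\sqrt{\epsilon/\tau}$. Hence $d'(x)$ has spread of order $\sqrt{\epsilon\tau}\,D$, and lands within $C\tau D$ of $D_{t+1}$ only with probability $O(C\sqrt{\tau/\epsilon})$, however large $D$ is.

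The same example rules out the tools you name. The Local Lemma is unavailable, because the failure probability is not small in terms of $D$ while the dependency degree grows with $D$. Plain Azuma is useless in any case: it ignores that each selection has probability $\epsilon/D_t$, while $\Theta(D_t^2)$ coordinates are relevant.

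\textbf{The repair stays inside your framework.} The relative codegree at round $t$ is at most $\tau D/D_t=O_{k,\sigma}(\tau)$. A second-moment computation then bounds the variance of $d'(v)$ by $O(\epsilon\tau D_t^2+D_t)$. By Chebyshev, all but an expected $O(\tau^{1/3})$-fraction of vertices have $d'(v)$ within $\tau^{1/3}D_t$ of its mean. So the recursion you can actually prove has the form $\tau_{t+1}\le C(\tau_t+\tau^{1/3})$, valid outside an exceptional set of density $O(\tau^{1/3})$. This suffices because $t_0$ is bounded and $\tau$ is chosen after $t_0$.

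Keep exceptional vertices rather than deleting them, since deletion lowers their neighbours' degrees and can cascade. Their degrees are automatically at most $(1+\tau)D$, a bounded multiple of $D_t$. The one-round analysis only needs all but a few vertices to be good, which is exactly Alon and Spencer's hypothesis ``all but $\gamma n$ vertices have degree $(1\pm\gamma)D$, and all have degree below $rD$''. With these changes your sketch is a proof.

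In the paper's own application, $D\sim n/\log n$ and codegrees are at most $\sqrt n=D^{1/2+o(1)}$. There a variance-sensitive inequality such as Talagrand's or Freedman's does give your $D^{-c}$-type errors, and even permits a union bound.
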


Now we convert the perfect fractional matching(s) from Section 2 to an almost perfect matching.

\begin{lemma}
[Almost perfect matching]
\label{lem:almostcover}
For every integer \(k\ge4\) and every \(\varepsilon>0\),
there exists \(\gamma=\gamma(k,\varepsilon)>0\) such that,
for every \(\eta>0\), the following holds for all sufficiently
large \(n\).
Let $H$ be a balanced $k$-partite $k$-graph with vertex classes $V_1,\ldots,V_k$, each of size $n$. Suppose that $\delta_1(H)\ge (c_k-\gamma)n^{k-1}$ and that $H$ is not $\varepsilon$-contained in $H_k^0(n;n)$.
Then $H$ contains a matching $M$ covering all but at most $\eta n$ vertices in each vertex class.
\end{lemma}

\begin{proof}
Let $1/n\ll \gamma\ll \gamma_0\ll \varepsilon,1/k$ and $1/n\ll \tau\ll \eta,1/k$.
Apply \Cref{thm:fractional-nonclose} with closeness parameter $\varepsilon$, and $\gamma_0$ in place of $\gamma$.
Set $t:=\lfloor n/\log n\rfloor$. 
We construct perfect fractional matchings $f_1,\ldots,f_t$ in $H$ such that, writing $E_i:=\{e\in E(H):f_i(e)>0\}$, the following properties hold:
\begin{enumerate}[label=(\roman*),font=\normalfont]
\item\label{v1} $|E_i|\le kn$ for every $i\in[t]$, \item\label{v2} $\sum_{i=1}^t\sum_{e\supseteq D}f_i(e)\le k$ for every pair $D\subseteq V(H)$, \item\label{v3} $E_1,\ldots,E_t$ are pairwise disjoint.
\end{enumerate}
We proceed inductively, starting with the empty family.
Suppose that $f_1,\ldots,f_s$ have already been constructed for some $s<t$ and satisfy \ref{v1}--\ref{v3}. 
For every pair $D\subseteq V(H)$, define $L_s(D):=\sum_{i=1}^s\sum_{e\supseteq D}f_i(e)$ and put $\mathcal U_s:=\{D\in\binom{V(H)}2:L_s(D)>k-1\}$.
Let $E'_s$ consist of all edges containing some pair in $\mathcal U_s$, and set $F_s:=E'_s\cup E_1\cup\cdots\cup E_s$ and $G_s:=H-F_s$.
We first show that only $o(n^{k-1})$ edges incident with any fixed vertex are deleted. 
Since each $f_i$ is a perfect fractional matching where $i\in[s]$, for every $v\in V(H)$ we have $\sum_{w\ne v}L_s(\{v,w\})=\sum_{i\in[s]}\sum_{w\ne v}\sum_{\{v,w\}\subseteq e}f_i(e)=(k-1)s$.
Hence at most $s$ pairs in $\mathcal U_s$ contain $v$, and these pairs account for at most $sn^{k-2}$ edges through $v$.
Moreover, $\sum_{D\in\binom{V(H)}2}L_s(D)=\binom{k}{2}sn$.
Since every pair in $\mathcal U_s$ has load larger than $k-1$, we obtain $|\mathcal U_s|\le \binom{k}{2}sn/(k-1)=ksn/2$.
Thus the pairs in $\mathcal U_s$ not containing $v$ account for at most $ksn^{k-2}/2$ edges through $v$.
Finally, by \ref{v1}, $d_{E_1\cup\cdots\cup E_s}(v)\le ksn$.
Consequently, $d_{F_s}(v)\le sn^{k-2}+ksn^{k-2}/2+ksn \le2ksn^{k-2}$. 
It follows that $\delta_1(G_s)\ge(c_k-\gamma_0)n^{k-1}$ since $1/n\ll\gamma\ll\gamma_0$.

Furthermore, $G_s$ is not $\varepsilon$-contained in $H_k^0(n;n)$. 
Otherwise, since $G_s\subseteq H$, the hypergraph $H$ would also be $\varepsilon$-contained in the same copy of $H_k^0(n;n)$, a contradiction.
Hence, \Cref{thm:fractional-nonclose} implies that $G_s$ has a perfect fractional matching and, in particular, $\nu^*(G_s)=n$. Applying \Cref{thm:ahj} to $kn$ copies of $G_s$, we obtain a fractional matching $f_{s+1}$ of size $n$ whose support $E_{s+1}:=\{e\in E(G_s):f_{s+1}(e)>0\}$ has size at most $kn$. 
Since $G_s$ has $kn$ vertices, $f_{s+1}$ is in fact a perfect fractional matching. 
We now verify \ref{v1}--\ref{v3} for $f_1,\ldots,f_{s+1}$.

\textbf{Verification of \ref{v1}.}
By the choice of $f_{s+1}$ from \Cref{thm:ahj}, its support satisfies $|E_{s+1}|\le kn$. 
Together with the induction
hypothesis, this proves \ref{v1}.
\textbf{Verification of \ref{v2}.}
Fix a pair $D\subseteq V(H)$. If $D\in\mathcal U_s$, then every edge containing $D$ belongs to $E'_s$, and hence no edge of $G_s$ contains $D$. Since $f_{s+1}$ is supported on $G_s$, we have $\sum_{e\supseteq D}f_{s+1}(e)=0$.
Therefore $L_{s+1}(D)=L_s(D)\le k$, where the last inequality follows from the induction hypothesis \ref{v2}.
If $D\notin\mathcal U_s$, then $L_s(D)\le k-1$. 
Since $f_{s+1}$ is a fractional matching, $\sum_{e\supseteq D}f_{s+1}(e)\le1$.
Hence $L_{s+1}(D)\le(k-1)+1=k$.
Thus \ref{v2} holds.
\textbf{Verification of \ref{v3}.}
Since $E_{s+1}\subseteq E(G_s)$ and $G_s=H-F_s$, while $E_1\cup\cdots\cup E_s\subseteq F_s$, we have $E_{s+1}\cap(E_1\cup\cdots\cup E_s)=\varnothing$.
Together with the induction hypothesis, this proves that $E_1,\ldots,E_{s+1}$ are pairwise disjoint, and hence \ref{v3} holds.

Now define $h(e):=\sum_{i=1}^t f_i(e)$.
Since $E_1,\ldots,E_t$ are pairwise disjoint, we have $0\le h(e)\le1$ for every edge $e$.
Construct a random spanning subgraph $J\subseteq H$ by including every edge $e$ independently with probability $h(e)$.
For every vertex $v$, $\mathbb E[d_J(v)]=\sum_{e\ni v}h(e)=t$, because every $f_i$ is perfect. 
For every pair $D$, $\mathbb E[d_J(D)]=\sum_{e\supseteq D}h(e)\le k$.
By Chernoff bound, we obtain that, with high probability, $(1-1/\log n)t\le d_J(v)\le(1+1/\log n)t$ holds for every $v\in V(H)$, and $\Delta_2(J)\le\sqrt n$.
Indeed, for a fixed vertex the failure probability is $\exp(-\Omega(n/\log^3 n))$, whereas for a fixed pair it is at most $(ek/\sqrt n)^{\sqrt n}$. 
A union bound over the $kn$ vertices and $O_k(n^2)$ pairs gives the desired
simultaneous estimates.

Apply \Cref{thm:fr} with parameter $\eta$. 
Let $\tau>0$ and $D_0$ be the corresponding constants. 
Since $t\sim n/\log n$, for sufficiently large $n$ we have $t\ge D_0$, $1/\log n<\tau$, and $\sqrt n<\tau t$.
Thus \Cref{thm:fr} gives a matching $M$ in $J$, and hence in $H$, which leaves at most $\eta kn$ vertices uncovered.
Since $H$ is balanced and every edge contains exactly one vertex from each class, the uncovered set is balanced.
Consequently at most $\eta n$ vertices are uncovered in each class, and therefore $|M|\ge(1-\eta)n$.
\end{proof}

\begin{theorem}\label{thm:nonclose}
Fix $k\ge4$ and $\varepsilon>0$.  For all sufficiently large $n$, every balanced $k$-partite $k$-graph $H$ satisfying
\[
 \delta_1(H)>d_k(n,n-1)
\]
and not $\varepsilon$-contained in $H_k^0(n;n)$ has a perfect matching.
\end{theorem}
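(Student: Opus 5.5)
We follow the standard absorbing strategy: first set aside an absorbing matching using \Cref{thm:absorber}, then find an almost perfect matching of the remainder via \Cref{lem:almostcover}, and finally absorb the leftover vertices.

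\textbf{Degree bound.} We first record that the hypothesis is far above the absorber's threshold. By \eqref{eq:dk-general} with $m=n-1$, we have $r\approx n/k$ and $t=k-1$ or $k$, so
\[
d_k(n,n-1)=\bigl(c_k+o(1)\bigr)n^{k-1},\qquad c_k=1-\Bigl(\tfrac{k-1}{k}\Bigr)^{k-1}.
\]
For $k\ge4$, $c_k\ge 1-(3/4)^3=37/64>1/2$. Hence $\delta_1(H)\ge(1/2+\alpha)n^{k-1}$ for a fixed small $\alpha>0$, say $\alpha<\min\{1/(10k^3),(c_k-1/2)/2\}$.

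\textbf{Constants.} We choose
\[
1/n\ll \eta\ll \alpha'\ll\alpha\ll\gamma\ll\varepsilon,1/k,
\]
where $\gamma=\gamma(k,\varepsilon/2)$ is the constant of \Cref{lem:almostcover}. We take $\alpha$ small enough that $k\alpha^k\ll\gamma$ and $k\alpha^k\ll\varepsilon$, while $\alpha'=\alpha^{2k-1}/20$ as in \Cref{thm:absorber}. Since \Cref{lem:almostcover} allows $\eta$ to be chosen after $\gamma$, we may take $\eta<\alpha'$.

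\textbf{Step 1: absorbing matching.} Apply \Cref{thm:absorber} with $\ell=1$ to obtain $M_{\rm abs}$ with $|M_{\rm abs}|\le(k-1)\alpha^kn$. Let $H'=H-V(M_{\rm abs})$. It is balanced with $n'=n-|M_{\rm abs}|\ge(1-k\alpha^k)n$ vertices per class. Deleting $V(M_{\rm abs})$ removes at most $(k-1)\cdot k\alpha^kn\cdot n^{k-2}$ edges through any vertex, so
\[
\delta_1(H')\ge\bigl(c_k-o(1)-k^2\alpha^k\bigr)n^{k-1}\ge(c_k-\gamma)(n')^{k-1}.
\]

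\textbf{Step 2: $H'$ is not $(\varepsilon/2)$-contained in $H_k^0(n';n')$.} This is the step needing care. Suppose a copy $F'$ on $V(H')$ satisfies $|E(F')\setminus E(H')|\le(\varepsilon/2)(n')^k$. We extend the sets $W_i^0$ defining $F'$ by adding vertices of $V(M_{\rm abs})$ (or removing a bounded number) so the sizes become $d_i(n)$. This gives a copy $F$ of $H_k^0(n;n)$ on $V(H)$. Edges of $F$ meeting $V(M_{\rm abs})$ number at most $k\cdot k\alpha^kn\cdot n^{k-1}\le(\varepsilon/2)n^k$. Every other edge of $F$ either lies in $F'$ or differs from it only because of the $O_k(1)$ rounding adjustment, which affects $O(n^{k-1})$ edges. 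Hence $|E(F)\setminus E(H)|\le\varepsilon n^k$, contradicting the hypothesis on $H$.

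\textbf{Step 3: almost perfect matching.} By \Cref{lem:almostcover} applied to $H'$ with parameters $\varepsilon/2$ and $\eta$, we obtain a matching $M$ in $H'$ leaving a balanced set $U$ with at most $\eta n'\le\alpha' n$ vertices per class. So $|U|\le k\alpha'n$.

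\textbf{Step 4: absorption.} \Cref{thm:absorber} yields a perfect matching of $H[V(M_{\rm abs})\cup U]$. Together with $M$, this gives a perfect matching of $H$.

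The main obstacle is the bookkeeping in Step 2, specifically the constant hierarchy. The absorber's size $k\alpha^kn$ must be negligible against both $\gamma$ and $\varepsilon$, while $\eta$ must stay below $\alpha'$. This is consistent because \Cref{lem:almostcover} fixes $\gamma$ before $\eta$.
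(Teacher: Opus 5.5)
Your proposal is correct and follows essentially the same route as the paper. You set aside the Lo--Markstr\"om absorbing matching and check that $H'=H-V(M_{\rm abs})$ still satisfies the degree and non-closeness hypotheses of \Cref{lem:almostcover}, the latter by extending the sets defining $F'$ with vertices of $V(M_{\rm abs})$. You then take the almost perfect matching and absorb the balanced leftover. One small simplification: no ``removal'' or rounding correction is needed in Step~2. Since $0\le d_i(n)-d_i(n')\le |M_{\rm abs}|$, the extension only adds absorber vertices, so the restriction of $F$ to $V(H')$ is exactly $F'$.
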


\begin{proof}
Let $c_k:=1-\left(\frac{k-1}{k}\right)^{k-1}$.
Apply \Cref{lem:almostcover} with closeness parameter $\varepsilon/2$.
By decreasing $\gamma$ if necessary, assume that $\gamma<\min\{c_k/2,c_k-1/2\}$.
Choose constants satisfying $1/n\ll \eta\ll \alpha'\ll \alpha\ll \gamma\ll \varepsilon,1/k$, where $\alpha':=\alpha^{2k-1}/20$.

By \eqref{eq:dk-general}, we have $d_k(n,n-1)=c_kn^{k-1}+O_k(n^{k-2})$.
Hence, $d_k(n,n-1)\ge(c_k-\gamma/10)n^{k-1}>(1/2+\alpha)n^{k-1}$.
Since $\delta_1(H)>d_k(n,n-1)$, \Cref{thm:absorber} with $\ell=1$ yields an absorbing matching $M_{\rm abs}$ of size $m:=|M_{\rm abs}|\le(k-1)\alpha^k n$ such that every balanced set $W\subseteq V(H)\setminus V(M_{\rm abs})$ with $|W|\le k\alpha'n$ can be absorbed by $M_{\rm abs}$.

Let $H':=H-V(M_{\rm abs})$ and let $N:=n-m$.
Since $M_{\rm abs}$ is a matching, $H'$ is a balanced $k$-partite $k$-graph with each vertex class of size $N$.

We first verify the minimum-degree condition required by \Cref{lem:almostcover}. Each vertex of $H'$ loses at most $(k-1)mn^{k-2}$ incident edges when $V(M_{\rm abs})$ is deleted. 
Therefore, $\delta_1(H')\ge\delta_1(H)-(k-1)mn^{k-2}$.
Using the lower bound on $d_k(n,n-1)$ and $m\le(k-1)\alpha^k n$, we obtain 
\[
\delta_1(H')\ge(c_k-\gamma/10)n^{k-1}-(k-1)^2\alpha^k n^{k-1}\ge(c_k-\gamma/5)n^{k-1}>(c_k-\gamma)N^{k-1}.
\]

We next claim that $H'$ is not $(\varepsilon/2)$-contained in $H_k^0(N;N)$. Suppose otherwise, then there are sets $W_i'\subseteq V_i(H')$, with $|W_i'|=d_i(N)$ for every $i\in[k]$, determining a copy $H_0'$ of $H_k^0(N;N)$ such that $|E(H_0')\setminus E(H')|\le(\varepsilon/2)N^k$.

For each $i\in[k]$, extend $W_i'$ to a set $W_i^0\subseteq V_i(H)$ with $|W_i^0|=d_i(n)$, using vertices of $V(M_{\rm abs})$. 
This is possible since $0\le d_i(n)-d_i(N)\le m$.
Let $H_0$ be the copy of $H_k^0(n;n)$ determined by $W_1^0,\ldots,W_k^0$. 
By construction, the restriction of $H_0$ to $V(H')$ is precisely $H_0'$.
Hence every edge of $E(H_0)\setminus E(H)$ either lies entirely in $V(H')$, in which case it belongs to $E(H_0')\setminus E(H')$, or intersects $V(M_{\rm abs})$.
Since $|V(M_{\rm abs})|=km$, there are at most $kmn^{k-1}$ edges of the latter type.
Consequently, $|E(H_0)\setminus E(H)|\le(\varepsilon/2)N^k+kmn^{k-1}$.
Using $N\le n$ and $m\le(k-1)\alpha^k n$, we obtain $|E(H_0)\setminus E(H)|
\le(\varepsilon/2)n^k+k(k-1)\alpha^k n^k<\varepsilon n^k$, contradicting the assumption that $H$ is not $\varepsilon$-contained in $H_k^0(n;n)$. 
Thus $H'$ is not $(\varepsilon/2)$-contained in $H_k^0(N;N)$.

We may therefore apply \Cref{lem:almostcover} to $H'$ with parameters $\varepsilon/2$ and $\eta=\alpha'/2$. 
Thus $H'$ contains a matching $M_{\rm near}$ covering all but at most $\eta N$ vertices in each vertex class.

Let $W:=V(H')\setminus V(M_{\rm near})$.
Since $H'$ is balanced and every edge of $M_{\rm near}$ contains exactly one vertex from each class, the set $W$ is balanced.
Moreover, $|W|\le k\eta N=k\alpha'N/2<k\alpha'n$.
By the absorbing property of $M_{\rm abs}$, $H[V(M_{\rm abs})\cup W]$ contains a perfect matching $M_{\rm fill}$. 
Since $M_{\rm near}$ is disjoint from $V(M_{\rm abs})\cup W$, the union $M_{\rm near}\cup M_{\rm fill}$ is a perfect matching of $H$.
\end{proof}

\section{Proof of \Cref{thm:main}}\label{sec:completion}

\begin{lemma}[Lu, Wang and Yuan, \cite{LWY}]\label{lem:close}
\label{thm:close}
Fix $k\ge4$.  There is $\varepsilon_k>0$ such that, for all sufficiently
large $n$, every balanced $k$-partite $k$-graph $H$ which is
$\varepsilon_k$-contained in $H_k^0(n;n)$ and satisfies $\delta_1(H)>d_k(n,n-1)$
has a perfect matching.
\end{lemma}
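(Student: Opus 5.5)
This lemma is not proved afresh here; it is imported from Lu, Wang and Yuan~\cite{LWY}. So the plan is to check that their close-case theorem applies verbatim in our setting, and to outline the argument one would give if it had to be reconstructed.

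\textbf{Step 1: match the statements.} In \cite{LWY} the close case is proved for every fixed $k\ge4$ with the same extremal family $H_k^0(n;n)$ and the same threshold $d_k(n,n-1)$ from \eqref{eq:dk-general}. I would check three points.
\begin{itemize}
\item Their notion of $\varepsilon$-closeness, namely at most $\varepsilon n^k$ missing edges from a copy of $H_k^0(n;n)$ respecting the partition up to permutation, agrees with ours.
\item Their degree hypothesis is exactly $\delta_1(H)>d_k(n,n-1)$.
\item The constant $\varepsilon_k$ they produce depends only on $k$.
\end{itemize}
Once these match, the lemma follows directly.

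\textbf{Step 2: the structure of their proof, if reconstruction is needed.}
\begin{enumerate}
\item Fix the sets $W_i^0$ with $|W_i^0|=d_i(n)$, so that $\sum_i|W_i^0|=n$, and put $U_i=V_i\setminus W_i^0$.
\item Classify vertices as \emph{typical} or \emph{atypical}. A typical vertex of $U_i$ has almost no edges inside the box $\prod_j U_j$. A typical vertex of $W_i^0$ has nearly full degree into edges meeting exactly one $W$-vertex.
\item Closeness gives only $O(\sqrt{\varepsilon}\,n)$ atypical vertices.
\item Greedily cover the atypical vertices by disjoint edges. Use the minimum degree $\delta_1(H)\ge c_kn^{k-1}$ and choose these edges so that afterwards the numbers of remaining $W$-vertices and $U$-vertices are balanced again: the remaining $W$-vertices must number exactly the remaining class size. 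Each edge should then use exactly one $W$-vertex.
\item The remaining hypergraph is almost complete on edges with exactly one $W$-vertex. Finish with a standard bipartite-type argument, e.g.\ Hall's theorem or a random greedy procedure followed by absorption.
\end{enumerate}

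\textbf{Main obstacle: the exact counting in item 4.} If $W$ has slightly too many vertices, edges containing two $W$-vertices are needed. If it has too few, edges inside the $U$-box are needed. The sharp threshold $\delta_1(H)>d_k(n,n-1)$ is used precisely here: since $H'_k(n;n-1)$ is the extremal graph, the degree exceeding $d_k(n,n-1)$ forces either an edge inside the $U$-box or suitably many disjoint edges with two $W$-vertices, whichever fixes the parity/size imbalance. This case analysis, handled in \cite{LWY} for general $k\ge4$, is where I expect the real work to lie. For our purposes we simply cite it.
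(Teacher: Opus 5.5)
Your proposal is correct and matches the paper, which also gives no proof and imports the lemma directly from Lu, Wang and Yuan \cite{LWY}. Their close-case theorem covers every fixed $k\ge4$ and uses the same notion of $\varepsilon$-containment in $H_k^0(n;n)$ and the same threshold $d_k(n,n-1)$, which is exactly what your Step~1 checks; your Step~2 outline is only heuristic and not needed (if you keep it, note that edges with two $W$-vertices are already plentiful by closeness, and that the sharp degree bound is what supplies disjoint edges avoiding $W$ when the reclassified $W$ falls short of $n$, a space imbalance rather than a parity one).
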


\begin{proof}[Proof of \Cref{thm:main}]
The construction $H'_k(n;n-1)$ has minimum vertex degree
$d_k(n,n-1)$ and no perfect matching, so $m'_1(k,n)\ge d_k(n,n-1)+1$.
For the reverse inequality, fix $\varepsilon=\varepsilon_k$ from
\Cref{thm:close}, and let $H$ be a balanced $k$-partite $k$-graph with
$\delta_1(H)>d_k(n,n-1)$.
If $H$ is $\varepsilon$-contained in $H_k^0(n;n)$, then apply \Cref{lem:close}.  
If it is not, then apply \Cref{thm:nonclose}.  In either case $H$ has a perfect matching.  Since vertex degrees are integers,
this proves $m'_1(k,n)=d_k(n,n-1)+1$.
\end{proof}

\medskip
\noindent
\textbf{Declaration of the use of generative AI.}
Generative AI (ChatGPT) was used to assist with manuscript preparation and language polishing. All proof ideas are entirely due to the authors, who take full responsibility for the accuracy and content of this work.

\begingroup
\raggedright

\endgroup

\end{document}